    \def\ps@pprintTitle{%
       \let\@oddhead\@empty
       \let\@evenhead\@empty
       \def\@oddfoot{\centerline{\thepage}}%
       \let\@evenfoot1
    }
\newtheorem{theorem}{Theorem}[section]
\newtheorem{proposition}[theorem]{Proposition}
\newtheorem{definition}[theorem]{Definition}
\newtheorem{lemma}[theorem]{Lemma}
\newtheorem{example}[theorem]{Example}
\newcommand{\tmmathbf}[1]{\mathbf{#1}}
\newcommand{\bpartial}{{\ensuremath{\boldsymbol{\partial}}}} 
\newcommand{\cA}{{\ensuremath{\mathcal{A}}}}
\newcommand{\cM}{{\ensuremath{\mathcal{M}}}}
\newcommand{\cV}{{\ensuremath{\mathcal{V}}}}
\newcommand{\CC}{\mathbb{C}}
\newcommand{\NN}{{\ensuremath{\mathbb{N}}}}
\DeclareMathOperator{\Hom}{Hom}
\DeclareMathOperator{\rank}{rank}
\DeclareMathOperator{\img}{img}
\DeclareMathOperator{\diag}{diag}
\def\PolExp{\mathcal{POLYEXP}}
\def\be{\mathbf{e}}
\def\bu{\mathbf{u}}
\def\bv{\mathbf{v}}
\def\bx{\mathbf{x}}
\def\by{\mathbf{y}}
\def\<{\langle}
\def\>{\rangle}
\def\Cx{\CC[\bx]}
\def\T{{\scriptscriptstyle\mathsf{T}}}
\def\H{{\scriptscriptstyle\mathsf{H}}}
\title{Structured low rank decomposition of multivariate Hankel matrices}
\author[inria,lama]{J. Harmouch}
\author[lama]{H. Khalil}
\author[inria]{B. Mourrain}
\ead{bernard.mourrain@inria.fr}
\address[inria]{UCA, Inria, \textsc{Aromath}, Sophia Antipolis, France}
\address[lama]{Laboratory of mathematics and its applications LaMa-Lebanon, Lebanese University}
\begin{document}
\begin{abstract}
We study the decomposition of a multivariate Hankel matrix $H_{\sigma}$ as a sum of Hankel matrices of small rank in correlation with the decomposition of its symbol $\sigma$ as a sum of polynomial-exponential series. 
We present a new algorithm to compute the low rank decomposition of the Hankel operator and the decomposition of its symbol exploiting the properties of the associated Artinian Gorenstein quotient algebra $\cA_{\sigma}$. 
A basis of $\cA_{\sigma}$ is computed from the Singular Value Decomposition of a sub-matrix of the Hankel matrix $H_{\sigma}$. The frequencies and the weights are deduced from the generalized eigenvectors of pencils of shifted sub-matrices of $H_{\sigma}$. Explicit formula for the weights in terms of the eigenvectors avoid us to solve a Vandermonde system. This new method is a multivariate generalization of the so-called Pencil method for solving Prony-type decomposition problems. We analyse its numerical behaviour in the presence of noisy input moments, and describe a rescaling technique which improves the numerical quality of the reconstruction for frequencies of high amplitudes. We also present a new Newton iteration, which converges locally to the closest multivariate Hankel matrix of low rank and show its impact for correcting errors on input moments.
\end{abstract}

\maketitle

\noindent{}{\bf AMS classification:} 14Q20,  68W30, 47B35, 15B05\\
\textbf{Keywords:} {Hankel; polynomial; exponential series; low rank decomposition; eigenvector; Singular Value Decomposition.}

\section{Introduction}
Structured matrices such as Toeplitz or Hankel matrices appear in many problems. 
They are naturally associated to operations on polynomials or series \cite{fuhrmann_polynomial_2012}.
The correlation with polynomial algebra can be exploited to accelerate matrix computations \cite{bini_polynomial_1994}.
The associated algebraic model provides useful information on the problem to be solved or the phenomena to be analysed. 
Understanding its structure often yields a better insight on the problem and its solution. 
In many cases, an efficient way to analyze the structure of the underlying models is to decompose the structured matrix into a sum of low rank matrices of the same structure. This low rank decomposition has applications in many domains \cite{markovsky_low_2012} and appears under different formulations \cite{landsberg_tensors:_2011,brachat_symmetric_2010,bernardi_general_2013}.

In this paper, we study specifically the class of Hankel matrices.
We investigate the problem of decomposing a Hankel matrix as a sum of indecomposable Hankel matrices of low rank. Natural questions arise.
What are the indecomposable Hankel matrices? Are they necessarily of rank $1$ ? 
How to compute a decomposition of a Hankel matrix as a sum of indecomposable Hankel matrices? Is the structured low rank decomposition of a Hankel matrix unique ? 

These questions have simple answers for non-structured or dense matrices: The indecomposable dense matrices are the matrices of rank one, which are the tensor product of two vectors. 
The Singular Value Decomposition of a dense matrix yields a decomposition as a minimal sum of rank one matrices, but this decomposition is not unique.

It turns out that for the Hankel structure, the answers to these questions are not so direct and involve the analysis of the so-called symbol associated to the Hankel matrix. The symbol is a formal power series defined from the coefficients of the Hankel matrix. As we will see, the structured decomposition of an Hankel matrix is closely related to the decomposition of the symbol as a sum of polynomial-exponential series.

The decomposition of the symbol of a Hankel matrix is a problem, which 
has a long history. The first work on this problem is probably due to  Gaspard-Clair-Fran{\c c}ois-Marie Riche de Prony {\cite{baron_de_prony_essai_1795}}. He proposed a method to reconstruct a sum of exponentials from the values at equally spaced data points, by computing a polynomial in the kernel of a Hankel matrix, and deducing the decomposition from the roots of this polynomial.
Since then, many works have been developed to address the decomposition problem in the univariate case, using linear algebra tools on Hankel matrices such as Pencil method \cite{hua_matrix_1990}, ESPRIT method {\cite{roy_esprit-estimation_1989}}
or MUSIC method {\cite{swindlehurst_performance_1992}}. 
Other methods such as {\cite{golub_separable_2003}}, 
approximate Prony Method \cite{beylkin_approximation_2005}, \cite{potts_nonlinear_2011} use minimization techniques, to recover the frequencies or the weights in the sum of exponential functions.
See {\cite{pereyra_exponential_2012}}[chap. 1] for a survey on some of these approaches. 

The numerical behavior of these methods has also been studied. The condition number of univariate Hankel matrices, which decomposition involves real points has been investigated in \cite{tyrtyshnikov_how_1994}, \cite{beckermann_condition_1997}. It is shown that it grows exponentially with the dimension of the matrix. The condition number of Vandermonde matrices of general complex points has been studied recently in \cite{pan_how_2016}.
In \cite{beckermann_numerical_2007}, the numerical sensitivity of the generalized eigenvalues of 
pencils of Hankel matrices appearing in Prony's method has been analysed.

The development of multivariate decomposition methods is more recent.
Extension of the univariate approaches have been considered
e.g. in \cite{andersson_nonlinear_2010}, {\cite{potts_parameter_2013}}, \cite{peter_pronys_2015}.
These methods project the problem in one dimension and solve several univariate 
decomposition problems to recover the multivariate decomposition by least square 
minimization from a grid of frequencies.
In \cite{peter_pronys_2015}, {\cite{kunis_multivariate_2016}}, the decomposition problem is transformed into the solution of an overdetermined polynomial system associated to the kernel of these Hankel matrices, and the frequencies of the exponential terms are found by general polynomial solver. 
These methods involves Hankel matrices of size exponential in the number of variables of the problem 
or moments of order at least twice the number of terms in the decomposition.
In {\cite{sauer_pronys_2016-1}}, an $H$-basis of the ideal defining the frequencies is computed 
from Hankel matrices built from moments of big enough order. Tables of multiplications are deduced from the $H$-basis and their eigenvalues yield the frequencies of the exponential terms. The weights are computed as the solution of a Vandermonde linear system. Moments of order bigger than twice the degree of an H-basis are involved in this construction.

\paragraph{Contributions}
We study the decomposition of a multivariate Hankel matrix as a sum of Hankel matrices of small rank in correlation with the decomposition of its symbol $\sigma$ as a sum of polynomial-exponential series. We show how to recover efficiently this decomposition from the structure of the quotient algebra $\cA_{\sigma}$ of polynomials modulo the kernel of the corresponding Hankel operator $H_{\sigma}$.   
In particular, a basis of $\cA_{\sigma}$ can be extracted from any maximal non-zero minor of the matrix of $H_{\sigma}$. We also show how to compute the matrix of multiplication by a variable in the basis of $\cA_{\sigma}$ 
from sub-matrices of the matrix of $H_{\sigma}$. We describe how the frequencies of the polynomial-exponential decomposition of the symbol can be deduced from the eigenvectors of these matrices.
Exploiting properties of these multiplication operators, we show that the weights of the decomposition can be recovered directly from these eigenvectors, avoiding the solution of a Vandermonde system.
We present a new algorithm to compute the low rank decomposition of $H_{\sigma}$ and the decomposition of its symbol as a sum of polynomial-exponential series from sub-matrices of the matrix of $H_{\sigma}$. A basis of $\cA_{\sigma}$ is computed from the Singular Value Decomposition of a sub-matrix. The frequencies and the weights are deduced from the generalized eigenvectors of pencils of sub-matrices of $H_{\sigma}$. This new method is a multivariate generalization of the so-called Pencil method for solving Prony-type decomposition problems. It can be used to decompose series as sums of polynomial-exponential functions from moments and provides structured low rank decomposition of multivariate Hankel matrices. 
We analyse its numerical behaviour in the presence of noisy input moments, for different numbers of variables, of exponential terms of the symbol and different amplitudes of the frequencies. 
We present a rescaling technique, which improves the numerical quality of the reconstruction for frequencies of high amplitudes. We also present a new Newton iteration, which converges locally to the multivariate Hankel matrix of a given rank the closest to a given input Hankel matrix. Numerical experimentations show that the Newton iteration combined the decomposition method allows to compute accurately and efficiently the polynomial-exponential decomposition of the symbol, even for noisy input moments.

\paragraph{Structure of the paper} The next section describes multivariate Hankel operators, their symbol and the generalization of Kronecker theorem, which establishes a correspondence between Hankel operators of finite rank and polynomial-exponential series. In Section \ref{sec:3}, we recall techniques exploiting the properties of multiplication operators for solving polynomial systems and show how they can be used for the Artinian Gorenstein algebra associated to Hankel operators of finite rank. In Section \ref{sec:4}, we describe in details the decomposition algorithm. Finally in section \ref{sec:5}, we present numerical experimentations, showing the numerical behaviour of the decomposition method for noisy input moments and the improvements obtained by  rescaling and by an iterative projection method.

\section{Hankel matrices and operators}
{\em Hankel matrices} are structured matrices of the form 
$$
H= [\sigma_{i+j}]_{0\le i\le l, 0\le j\le m}
$$
where the entry $\sigma_{i+j}$ of the $i^{\mathrm{th}}$ row and the  $j^{\mathrm{th}}$ columns depends only on the sum $i+j$.
By reversing the order of the columns or the rows, we obtain {\em Toeplitz matrices}, which entries depend on the difference of the row and column indices. Exploiting their structure leads to superfast methods for many linear algebra operations such as matrix-vector product, solution of linear systems, \ldots (see e.g. \cite{bini_polynomial_1994}). 

A Hankel matrix is a sub-matrix of the matrix of the {\em Hankel operator} associated to a sequence $\sigma=(\sigma_{k})\in \CC^{\NN}$: 
  \begin{eqnarray*}
    H_{\sigma}: L_{0}(\CC^{\NN}) & \rightarrow &\CC^{\NN} \\
    (p_{k})_{k} & \mapsto & (\sum_{k} p_{k} \sigma_{k+l})_{l\in \NN} 
 \end{eqnarray*}
where $L_{0}(\CC^{\NN})$ is the set of sequences  of $\CC^{\NN}$ with a finite support.

{\em Multivariate Hankel matrices} have a similar structure of the form
$$ 
H = [\sigma_{\alpha+\beta}]_{\alpha\in A, \beta\in B}
$$
where $A,B\subset \NN^{n}$ are subsets of multi-indices $\alpha=(\alpha_{1},\ldots,\alpha_{n})\in \NN^{n}$  $\beta=(\beta_{1},\ldots,\beta_{n})\in \NN^{n}$ indexing respectively the rows and columns.
{\em Multivariate Hankel operators} are associated to multi-index sequences $\sigma=(\sigma_{\alpha})_{\alpha\in \NN^{n}}\in \CC^{\NN^{n}}$:
\begin{eqnarray}\label{eq:hankel}
    H_{\sigma}: L_{0}(\CC^{\NN^{n}})& \rightarrow &\CC^{\NN^{n}} \\ 
    (p_{\alpha})_{\alpha} & \mapsto & (\sum_{\alpha} p_{\alpha} \nonumber \sigma_{\alpha+\beta})_{\beta\in \NN^{n}} 
 \end{eqnarray}
 where $L_{0}(\CC^{\NN^{n}})$ is the set of sequences  of $\CC^{\NN^{n}}$ with a finite support. In order to describe the algebraic properties of Hankel operators, we will identify hereafter the space $L_{0}(\CC^{\NN^{n}})$ with the ring $\CC[\bx]=\CC[x_1,\ldots, x_n]$ of polynomials in the variables $\bx=(x_1,\ldots, x_n)$ with coefficients in $\CC$.

The set of multi-index sequences $\CC^{\NN^{n}}$ can be identified with the ring of formal power series
 $\CC[[y_{1},\ldots,y_{n}]]=\CC[[\by]]$. A sequence $\sigma=(\sigma_{\alpha})_{\alpha}$ is identified with the series
$$ 
  \sigma (\by) = \sum_{\alpha \in \NN^n} \sigma_{\alpha} \frac{\by^{\alpha}}{\alpha !} \in  \CC[[\by]]
$$
where $\by^{\alpha}=y_{1}^{\alpha_{1}}\cdots y_{n}^{\alpha_{n}}$, $\alpha!= \prod_{i=1}^{n} \alpha_{i}!$ for $\alpha=(\alpha_{1},\ldots,\alpha_{n})\in \NN^{n}$.
It can also be interpreted as a linear functional on polynomials as follows:
$$
\begin{array}{rcl}
\sigma: \Cx & \rightarrow & \CC\\
p = \sum_{\alpha \in A\subset \NN^n } p_{\alpha}  \bx^{\alpha} & \mapsto& \langle \sigma
   \mid p \rangle = \sum_{\alpha \in A \subset \NN^n} p_{\alpha}   \sigma_{\alpha}.
\end{array}
$$
The identification of $\sigma$ with an element of $\Cx^{\ast}=\Hom_{\CC}(\CC [\bx], \CC)$ is uniquely defined by its
coefficients $\sigma_{\alpha} = \langle \sigma \mid
\bx^{\alpha} \rangle$ for $\alpha \in \NN^n$, which are
called the {\em{moments}} of $\sigma$.
This allows us to identify  the dual $\Cx^{\ast}$ with $\CC [[\by]]$ or with the set of multi-index sequences $\CC^{\NN^{n}}$.

The dual space $\Cx^{\ast} \equiv \CC [[\by]]$ has a natural structure of $\CC[\bx]$-module, defined as follows: 
\begin{eqnarray*}
\forall \sigma \in \CC [[\by]], \forall p , q \in \CC[\bx],\   \langle p  \star \sigma \mid q  \rangle  = \langle \sigma \mid p  q  \rangle.
\end{eqnarray*}
For a polynomial $p = \sum_{\alpha \in \NN^n} p_{\alpha} \bx^{\alpha}$ with $p_{\alpha}=0$ for almost all $\alpha \in \NN^n$, we have
$$ 
p\star \sigma = \sum _{\beta\in \NN^{n}}  (\sum_{\alpha\in \NN^{n}} p_{\alpha} \sigma_{\alpha+\beta}) \frac{\by^{\alpha}}{\alpha !}. 
$$
We check that $p\star \sigma= p(\partial_1, \ldots, \partial_n)(\sigma)=p(\bpartial)(\sigma)$ where $\bpartial=(\partial_1,\ldots, \partial_n)$ and $\partial_{i}$ is the derivation with respect to the variable $y_{i}$.

Identifying $L_{0}(\CC^{\NN^{n}})$ with $\CC[\bx]$, the Hankel operator \eqref{eq:hankel} is nothing else than the operator of multiplication by $\sigma$:
\begin{eqnarray*}
    H_{\sigma}: \CC[\bx]& \rightarrow &\CC [[\by]] \\ 
    p & \mapsto & p\star \sigma
\end{eqnarray*}
Truncated Hankel operators are obtained by restriction of Hankel operators.
For $A,B\subset \NN^{n}$, let $\<\bx^{B}\>\subset \CC[\bx]$, $\< \by^{A} \>\subset \CC[[\by]]$ be the vector spaces spanned respectively by the monomials $\bx^{\beta}$ for $\beta\in B$ and $\by^{\alpha}$ for $\alpha\in A$. The truncated Hankel operator of $\sigma$ on $A,B$ is 
$$  
\begin{array}{rcl}
  H^{A,B}_{\sigma} : \<\bx^{B}\> & \rightarrow & \<\by^{A}\>
  \\ 
 p = \sum_{\beta\in B} p_{\beta}\bx^{\beta}& \mapsto & \sum_{\alpha\in A} (\sum_{\beta\in B} p_{\alpha} \sigma_{\alpha+\beta}) \frac{\by^{\alpha}}{\alpha !} = p\star \sigma_{|\<\bx^{A}\>}
\end{array}
$$
The matrix of $H_{\sigma}^{A,B}$ in the bases $(\bx^{\beta})_{\beta\in B}$ and $(\frac{\by^{\alpha}}{\alpha!} )_{\alpha\in A}$ is of the form:
$$
H_{\sigma}^{A,B}=[\sigma_{\alpha+\beta}]_{\alpha\in A, \beta\in B}. 
$$
It is also called the {\em moment matrix} of $\sigma$. 
Multivariate Hankel matrices have a structure, which can be exploited to accelerate linear algebra operations (see e.g. \cite{mourrain_multivariate_2000} for more details).

\paragraph{Example} Consider the series $\sigma= 1 + 2 y_{1} + 3 y_{2} + 4 \frac{y_{1}^{2}}{2} + 5 y_{1} y_{2} + 6  \frac{y_{2}^{2}}{2} + 7  \frac{y_{1}^{3}}{6}
+ 8  \frac{y_{1}^{2} y_{2}}{2}  + \cdots \in \CC[[y_{1}, y_{2}]]$. Its truncated Hankel matrix on $A=[(0,0),(1,0), (0,1)]$ (corresponding to the monomials $1,x_{1}, x_{2}$), 
$B=[(0,0),(1,0),(0,1)$, $(2,0)]$ (corresponding to the monomials $1,x_{1},x_{2},x_{1}^{2}$) is 
$$ 
H_{\sigma}^{A,B} =
\left[
  \begin{array}{cccc}
    1 & 2 & 3 & 4\\ 
    2 & 4 & 5 & 7\\ 
    3 & 5 & 6 & 8 \\ 
\end{array}
 \right]
 $$

For $d\in \NN$, we denote by $\Cx_{d}$ the vector space of polynomials of degree $\le d$. Its dimension is $s_d={n+d \choose n}$.
For $d, d'\in \NN$, we denote by $H_{\sigma}^{d,d'}$ the Hankel matrix of $\sigma$ on the subset of monomials in $\bx$ respectively of degree $\le d$ and $\le d'$. We also denote by $H_{\sigma}^{d,d'}$ the corresponding truncated Hankel operator of $H_{\sigma}$ from $\Cx_{d'}$ to $(\Cx_{d})^{\ast}$.
More generally, for $U=\{u_1, \ldots, u_l\}\subset \Cx$, $V=\{v_1,\ldots, v_m\}\subset \Cx$, the Hankel matrix of $\sigma$ on $U$, $V$ is $H_{\sigma}^{U,V}= (\< \sigma\mid  u_{i}\, v_{j}\>)_{1\leq i\leq l, 1\leq j\leq m}$. We use the same notation $H_{\sigma}^{U,V}$ for the truncated Hankel operator from $\< V\>$ to $\< U\>^{\ast}$.

\subsection{Hankel operator of finite rank}
We are interested in structured decompositions of Hankel matrices (resp. operators) as sums of Hankel matrices (resp. operators) of low rank. 
This raises the question of describing the Hankel operators of finite rank and leads to the problem of decomposing them into indecomposable Hankel operators of low rank.

A first answer is given by the celebrated theorem of Kronecker \cite{kronecker_zur_1881}.
\begin{theorem}[Kronecker Theorem]
    The Hankel operator
    $$
    H_{\sigma}: (p_{k}) \in L_{0}(\CC^{\NN})\mapsto (\sum_{k} p_{k}\sigma_{k+l})_{l\in \NN}\in \CC^{\NN}
    $$
is of {finite rank} $r$, if and only if, there exist polynomials $\omega_{1},\ldots, \omega_{r'}\in \CC[y]$ and 
$\xi_{1},\ldots,\xi_{r'}\in \CC$ distinct s.t.
$$
\sigma_{n}= \sum_{i=1}^{r'} \omega_{i}(n)\xi_{i}^{n}
$$ with $\sum_{i=1}^{r'}(\deg(\omega_{i})+1)=r$.
\end{theorem}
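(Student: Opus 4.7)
The plan is to identify $\ker H_\sigma$ as an ideal of $\CC[y]$ via the $\star$-action and exploit the fact that $\CC[y]$ is a principal ideal domain. Concretely, after identifying $L_0(\CC^\NN)$ with $\CC[y]$ as in the preceding discussion, $H_\sigma(p) = p \star \sigma$, and the relation $(qp)\star\sigma = q\star(p\star\sigma)$ shows that $I_\sigma := \ker H_\sigma$ is an ideal. The image of $H_\sigma$ is therefore isomorphic to the quotient $\CC[y]/I_\sigma$, so $H_\sigma$ has finite rank $r$ iff $I_\sigma \neq (0)$ and $\dim_\CC \CC[y]/I_\sigma = r$. Since $\CC[y]$ is a PID, $I_\sigma = (f)$ for a (unique monic) polynomial $f$, and then $r = \deg f$.

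For the direction ($\Rightarrow$), I would factor $f$ over $\CC$ as $f(y) = \prod_{i=1}^{r'} (y-\xi_i)^{m_i}$ with distinct roots and $\sum m_i = r$. The condition $f\star\sigma = 0$ unpacks, using the definition of $\star$, into the linear recurrence $\sum_k f_k\, \sigma_{n+k} = 0$ for every $n \geq 0$, whose characteristic polynomial is $f$. By the classical theory of linear recurrences with constant coefficients (which can be derived by applying the Chinese Remainder Theorem to $\CC[y]/(f) \cong \prod_i \CC[y]/(y-\xi_i)^{m_i}$ and dualizing), the space of solutions is exactly
\[
\Bigl\{\, (\sigma_n)_{n\ge 0} \ :\ \sigma_n = \sum_{i=1}^{r'} \omega_i(n)\, \xi_i^n,\ \deg \omega_i < m_i \,\Bigr\},
\]
a space of dimension $\sum_i m_i = r$. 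Hence $\sigma$ admits such an expression.

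For ($\Leftarrow$), starting from $\sigma_n = \sum_{i=1}^{r'}\omega_i(n)\xi_i^n$ with distinct $\xi_i$ and $\sum(\deg\omega_i+1)=r$, let $m_i := \deg\omega_i + 1$ and $f(y) := \prod_i (y-\xi_i)^{m_i}$. A direct check (e.g. verifying it on each basis element $n^j \xi_i^n$) shows $f\star\sigma = 0$, so $(f)\subseteq I_\sigma$ and $\mathrm{rank}\, H_\sigma \leq \deg f = r$. For the reverse inequality, I would exhibit $r$ linearly independent elements of the image: the key fact is the linear independence of the sequences $n^j\xi_i^n$ for $0\le j<m_i$ across distinct $\xi_i$ (a Vandermonde-type argument), which forces no strict divisor of $f$ to annihilate $\sigma$ — so $I_\sigma = (f)$ exactly and $\mathrm{rank}\,H_\sigma = r$.

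The main subtlety is bookkeeping around the $\star$-action: translating $f\star\sigma = 0$ into the statement that $(\sigma_n)$ satisfies the recurrence with characteristic polynomial $f$, and correctly matching the convention used in the paper (where the symbol $\sigma(y)=\sum \sigma_n y^n/n!$ is written with factorials). Once that dictionary is established, everything reduces to the well-known one-variable recurrence theory, and the rank count follows automatically from $\dim \CC[y]/(f) = \deg f$.
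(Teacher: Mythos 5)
Your proof is correct in its main thrust and fills a gap: the paper states this univariate Kronecker theorem with only a citation to Kronecker (1881) and gives no proof, so there is no argument in the paper to compare against. Your route --- identify $\ker H_\sigma$ as a nonzero ideal $I_\sigma$ via the $\star$-action, use that $\CC[y]$ is a PID to write $I_\sigma=(f)$ and read off $\rank H_\sigma=\dim_\CC \CC[y]/(f)=\deg f$, translate $f\star\sigma=0$ into the shift recurrence $\sum_k f_k\sigma_{n+k}=0$, and enumerate solutions by the Chinese Remainder Theorem on $\CC[y]/(f)$ --- is the standard elementary proof, and in one variable it is cleaner and more explicit than the Artinian Gorenstein machinery the paper later invokes for the multivariate Generalized Kronecker Theorem, because the PID structure hands you the minimal recurrence polynomial directly. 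Your parenthetical remark about dualizing the CRT decomposition is exactly the right way to produce the basis of solutions, and the reverse inequality (no proper divisor of $f$ annihilates $\sigma$) via linear independence of the sequences $n^j\xi_i^n$ is correct whenever all $\xi_i\neq 0$.

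One point deserves care, though it is an imprecision inherited from the theorem statement itself rather than a defect you introduced. If $0$ is a root of $f$ with multiplicity $m$, the CRT factor $y^m$ contributes to the solution space the finitely supported sequences $(\delta_{n,j})_{n\ge 0}$ for $0\le j<m$; these are not of the form $\omega(n)\,0^n$, which is nonzero only at $n=0$. Concretely, $\sigma_n=\delta_{n,1}$ gives $\rank H_\sigma=2$ yet admits no representation $\sigma_n=\sum_i\omega_i(n)\xi_i^n$, so the biconditional as literally written fails. The standard repair, which the paper tacitly adopts in the sentence immediately following the theorem, is to phrase the result for the symbol: $H_\sigma$ has finite rank $r$ iff $\sigma(y)=\sum_n\sigma_n\frac{y^n}{n!}=\sum_i\tilde\omega_i(y)e^{\xi_i y}$ with $r=\sum_i(\deg\tilde\omega_i+1)$; the root $\xi=0$ then produces a polynomial summand $\tilde\omega(y)$, which correctly encodes the finitely supported piece. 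The translation between $\tilde\omega_i(y)$ and $\omega_i(n)$ that you allude to (which absorbs the factorial normalization) is a bijection only when $\xi_i\neq 0$. So either add the hypothesis $f(0)\neq 0$ throughout, or carry out your argument at the level of the symbol $\sigma(y)$; with that adjustment the proof is complete.
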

This results says that the Hankel operator $H_{\sigma}$ is of finite rank, if and only if, its symbol $\sigma$ is of the form
$$ 
\sigma(y)= \sum_{n\in\NN} \sigma_n \frac{y^{n}}{n!} = \sum_{i=1}^{r'} \tilde{\omega}_{i}(y) e^{\xi_{i} y} 
$$
for some univariate polynomials $\tilde{\omega}_{i}(y)\in \CC[y]$ and distinct complex numbers $\xi_{i}$ $i=1,\ldots,r'$. Moreover, the rank of $H_{\sigma}$ is $r = \sum_{i=1}^{r'} (\deg(\tilde{\omega}_{i})+1)$.

The previous result admits a direct generalization to multivariate Hankel operators, using polynomial-exponential series. 
\begin{definition}
For $\xi=(\xi_{1},\ldots, \xi_{n})\in \CC^{n}$, we denote $\be_{\xi}(\by)=e^{y_{1}\xi_{1}+ \cdots+ y_{n}\xi_{n}} = \sum_{\alpha\in \NN^{n}}  \xi^{\alpha}\, \frac{\by^{\alpha}}{\alpha!} \in \CC[[\by]]$ where $\xi^{\alpha}=\xi_{1}^{\alpha_{1}} \cdots \xi_{n}^{\alpha_{n}}$ for $\alpha=(\alpha_{1}, \ldots, \alpha_{n}) \in \NN^{n}$.

  Let $\PolExp(\by) = \left\{ \sigma = \sum_{i = 1}^r \omega_i (\by) \be_{\xi_i} (\by) \in
  \CC [[\by]] \mid \xi_i \in \CC^n, \omega_i  (\by)\in \CC [\by] \right\}$ be the set of polynomial-exponential series. The polynomials $\omega_i (\by)$ are called the weights of $\sigma$
and $\xi_{i}$ the frequencies.

For $\omega  (\by)\in \CC [\by]$, we denote by $\mu(\omega)$ the dimension of the vector space spanned by $\omega$ and its derivatives $\partial^{\alpha}\omega(\by)=\partial_1^{\alpha_1} \cdots \partial_{n}^{\alpha_{n}}  \omega(\by)$ of any order for $\alpha=(\alpha_{1}, \ldots, \alpha_{n}) \in \NN^{n}$.
\end{definition}
The next theorem characterizes the multivariate Hankel operators of finite rank in terms of their symbol \cite{mourrain_polynomial-exponential_2016}:
\begin{theorem}[Generalized Kronecker Theorem]\label{thm:gorenstein} Let $\sigma (\by) \in \CC [[\by]]$. Then $\rank H_{\sigma} =r < \infty$, if
  and only if, $\sigma (\by) = \sum_{i = 1}^{r'}\omega_i (\by) \tmmathbf{e}_{{\xi_i}} (\by)  \in \PolExp(\by)$ 
with 
$\omega_i (\by) \in \CC[\by] \setminus \{0\}$ and $\xi_i \in \CC^n$ pairwise distinct, with $r = \sum_{i = 1}^{r'} {\mu} (\omega_i)$ where
  ${\mu} (w_i)$ is the dimension of the inverse system spanned by
  $\omega_i (\by)$ and all its derivatives
  $\tmmathbf{\partial}^{\alpha} \omega_i (\by)$ for
  $\alpha = (\alpha_1, \ldots, \alpha_n) \in \NN^n$.
\end{theorem}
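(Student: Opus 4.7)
The first move is to reinterpret $H_\sigma$ through the associated quotient. Because $p\star \sigma = 0$ means $\langle \sigma \mid pq\rangle=0$ for every $q\in\Cx$, the kernel $I_\sigma := \ker H_\sigma$ is an ideal of $\Cx$, and the induced map $\cA_\sigma = \Cx/I_\sigma \to \CC[[\by]]$, $\bar p\mapsto p\star\sigma$, is injective. Hence $\rank H_\sigma = \dim_\CC \cA_\sigma$, and finite rank is equivalent to $\cA_\sigma$ being a finite-dimensional (hence Artinian) $\CC$-algebra. This identification is the backbone: the rank side becomes a statement about $\cA_\sigma$, and the polynomial-exponential side becomes a statement about its $\CC$-linear dual realised inside $\CC[[\by]]$.

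For the direction \emph{polynomial-exponential $\Rightarrow$ finite rank} I would first compute the action of $\Cx$ on a single summand. Using $\bpartial_i(f\, \be_\xi) = ((\bpartial_i + \xi_i)f)\, \be_\xi$ and induction on $|\alpha|$ one gets the clean formula
$$
p\star(\omega\, \be_\xi) \;=\; p(\bpartial + \xi)(\omega)\cdot \be_\xi,
$$
for every $p\in\Cx$ and $\omega\in\CC[\by]$. As $p$ ranges over $\Cx$, $p(\bpartial+\xi)$ ranges over all of $\CC[\bpartial]$, so the image of $H_{\omega_i\,\be_{\xi_i}}$ is exactly $\{q(\bpartial)\omega_i\cdot\be_{\xi_i} : q\in\CC[\bpartial]\}$, a space of dimension $\mu(\omega_i)$ by definition. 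Summing over $i$, the image of $H_\sigma$ lies in $\sum_i \be_{\xi_i}\cdot \CC[\by]$; linear independence of the $\be_{\xi_i}$ for pairwise distinct $\xi_i$ (a standard Wronskian/evaluation argument) makes this sum direct, giving $\rank H_\sigma = \sum_{i=1}^{r'}\mu(\omega_i)$.

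For the converse direction the heart of the argument is the structure theory of Artinian algebras together with Macaulay's inverse systems. Assuming $\dim \cA_\sigma = r<\infty$, the variety $V(I_\sigma)$ is a finite set $\{\xi_1,\ldots,\xi_{r'}\}\subset \CC^n$ and primary decomposition yields $\cA_\sigma \cong \prod_{i=1}^{r'} \cA_{\sigma,i}$, where $\cA_{\sigma,i}$ is local at $\xi_i$. Dualising gives a decomposition $\sigma = \sum_i \sigma_i$ in $\cA_\sigma^\ast \subset \CC[[\by]]$, with $\sigma_i$ supported on $\cA_{\sigma,i}$. One then identifies each $\cA_{\sigma,i}^\ast$ with the Macaulay inverse system at $\xi_i$: after the translation $\bx\mapsto \bx-\xi_i$, the local dual consists of polynomial functionals $\omega\in\CC[\by]$ acting by $p\mapsto \omega(\bpartial)(p)(\xi_i)$, which is exactly $\langle \omega(\by)\,\be_{\xi_i}(\by)\mid p\rangle$ up to the chosen pairing. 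This gives $\sigma_i = \omega_i(\by)\,\be_{\xi_i}(\by)$ for a unique $\omega_i\neq 0$, and the dimension of this local dual equals $\mu(\omega_i)$. Summing, $r=\sum_i \dim \cA_{\sigma,i} = \sum_i \mu(\omega_i)$.

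The step I expect to be the main obstacle is the Macaulay identification in the converse direction: checking that the translation-and-duality procedure really identifies $\cA_{\sigma,i}^\ast$ with the $\CC[\bpartial]$-submodule of $\CC[\by]$ generated by $\omega_i$, and that the resulting dimension is exactly $\mu(\omega_i)$. The rest, namely the kernel-is-an-ideal observation, the Leibniz-type formula $p\star(\omega\,\be_\xi) = p(\bpartial+\xi)(\omega)\,\be_\xi$, and the linear independence of distinct $\be_{\xi_i}$, is essentially computation once the framework is in place.
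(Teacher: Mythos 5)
The paper does not give a proof of this theorem: it states it and cites \cite{mourrain_polynomial-exponential_2016}, and the related equivalences of Section~3.2 and Proposition~\ref{prop:iso} on which your argument leans are quoted from that same source. So there is no internal proof to compare against. Your proposal is nevertheless a faithful reconstruction along the lines the paper implicitly adopts: the identification $\rank H_\sigma = \dim_\CC \cA_\sigma$, the shift identity $p\star(\omega\,\be_\xi) = p(\bpartial+\xi)(\omega)\,\be_\xi$, linear independence of polynomial-exponential terms with pairwise distinct frequencies, and, for the converse, primary decomposition of the Artinian algebra $\cA_\sigma$ together with the Macaulay inverse-system description of each local factor. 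Your flag that the Macaulay identification is the delicate point is accurate.

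One step in the forward direction is stated too quickly and is, as written, a gap. You show $\img H_\sigma \subseteq \bigoplus_{i} \be_{\xi_i}\cdot D_i$ where $D_i$ is the inverse system of $\omega_i$, and then conclude $\rank H_\sigma = \sum_i\mu(\omega_i)$ from the directness of the containing sum. Directness only yields the upper bound $\rank H_\sigma \le \sum_i\mu(\omega_i)$; a subspace of a direct sum need not exhaust it. To close this, either note that each $\ker H_{\omega_i\be_{\xi_i}}$ is $\mm_{\xi_i}$-primary (it contains a power of $\mm_{\xi_i}$ since $\omega_i$ has finite degree), so these ideals are pairwise comaximal and the Chinese Remainder Theorem gives $\cA_\sigma \cong \prod_i \Cx/\ker H_{\omega_i\be_{\xi_i}}$, of dimension $\sum_i\mu(\omega_i)$; or pick $u_i\in\Cx$ with $u_i(\xi_i)=1$ vanishing to sufficiently high order at the other $\xi_j$: then $u_i\star\sigma = u_i(\bpartial+\xi_i)(\omega_i)\,\be_{\xi_i}$, and $u_i(\bpartial+\xi_i)$ acts unipotently, hence bijectively, on $D_i$, so $(\Cx\,u_i)\star\sigma = \be_{\xi_i}D_i \subseteq \img H_\sigma$. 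With that patch your argument is complete; the remaining compressions (e.g.\ $\omega_i\neq 0$ follows from nondegeneracy of the Gorenstein pairing on each local factor) are routine.
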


\begin{example} \rm For $\xi\in \CC^{n}$, the series $\be_{\xi} (\by) = \sum_{\alpha \in \NN^{n}} \xi^{\alpha}\, \frac{\by^{\alpha}}{\alpha!}= e^{\by\cdot\xi}$ represents the linear functional corresponding to the evaluation at $\xi$:
$$ 
\forall p\in R, \< \be_{\xi} | p\> = \sum_{\alpha \in \NN^{n}} p_{\alpha}\, \xi^{\alpha} =  p (\xi). 
$$
The Hankel operator $H_{\be_{\xi}}: p \mapsto p\star \be_{\xi} = p(\xi) \be_{\xi}$ is of rank $1$, since its image is spanned by $\be_{\xi}$.
For $A,B \subset \NN^{n}$, the Hankel matrix of $\be_{\xi}$ is
$H_{\xi}^{A,B}=[\xi^{\beta+\alpha}]_{\beta\in B,\alpha\in A}.$ 
If $H_{\xi}^{A,B}\neq 0$, it is a matrix of rank $1$.
\end{example}
Hankel operators associated to evaluations $\be_{\xi}$ are of rank $1$. As shown in the next example, a Hankel operator of rank $>1$ is not necessarily the sum of such Hankel operators of rank $1$.
\begin{example}  For $n=1$ and $\sigma=y$, we check that $H_{y}$ is of rank $2$, but it cannot be decomposed as a sum of two rank-one Hankel operators. If $A=\{1,x,x^2\}$, we have
$$ 
H_{y}^{A,A} =
 \left[
   \begin{array}{ccc}
0&     1 & 0\\ 
     1&    0 & 0 \\
     0&    0 & 0 \\
\end{array}
\right] 
\neq
\lambda_{1}\,
 \left[
   \begin{array}{ccc}
     1 & \xi_{1} & \xi_{1}^{2}\\ 
     \xi_{1} & \xi_{1}^{2} & \xi_{1}^{3}\\ 
    \xi_{1}^{2} & \xi_{1}^{3} &\xi_{1}^{4}\\     
   \end{array}
\right]
+
\lambda_{2}\,
 \left[
   \begin{array}{ccc}
     1 & \xi_{2} & \xi_{2}^{2}\\ 
     \xi_{2} & \xi_{2}^{2} & \xi_{2}^{3}\\ 
    \xi_{2}^{2} & \xi_{2}^{3} &\xi_{2}^{4}\\     
   \end{array}
\right]
$$
for $\lambda_1,\lambda_2, \xi_1,\xi_2 \in \CC$. This shows that the symbol $y$ is indecomposable as a sum of polynomial-exponential series, though it defines an Hankel operator of rank $2$.
\end{example}

\begin{definition} For $\sigma \in \CC[[\by]]$, we say that $\sigma$ is indecomposable if ${\sigma}$ cannot be written as a sum ${\sigma}= {\sigma_1} + {\sigma_{2}}$ with $\img H_{\sigma} = \img H_{\sigma_{1}} \oplus  \img H_{\sigma_{2}}$.
\end{definition}
\begin{proposition} The series $\omega(\by) \,\be_{\xi}(\by)$ with $\omega(\by) \in \CC[\by] \setminus \{0\}$ and $\xi \in \CC^n$ is indecomposable. 
\end{proposition}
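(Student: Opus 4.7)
The plan is to exploit the $\CC[\bx]$-module structure of $\img H_{\sigma}$ and reduce the indecomposability of $\sigma = \omega(\by)\,\be_{\xi}(\by)$ to the fact that the associated quotient algebra $\cA_{\sigma}$ is local. The starting observation is that for any $\tau \in \CC[[\by]]$, the image $\img H_{\tau} = \CC[\bx]\star\tau$ is the cyclic $\CC[\bx]$-submodule of $\CC[[\by]]$ generated by $\tau$, and the map $p \mapsto p\star\tau$ induces a $\CC[\bx]$-linear isomorphism $\cA_{\tau} = \CC[\bx]/\ker H_{\tau} \cong \img H_{\tau}$. Hence if $\sigma = \sigma_1 + \sigma_2$ with $\img H_{\sigma} = \img H_{\sigma_1}\oplus \img H_{\sigma_2}$, each summand $\img H_{\sigma_i}$ is a $\CC[\bx]$-submodule of $\img H_{\sigma}$, so the decomposition is automatically $\CC[\bx]$-linear and, under the isomorphism, transports to a direct sum decomposition $\cA_{\sigma} = I_1 \oplus I_2$ into two ideals.

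The next step is to show that $\cA_{\sigma}$ is a local Artinian $\CC$-algebra when $\sigma = \omega\,\be_{\xi}$ with $\omega \neq 0$. The Leibniz rule gives
$$
p \star (\omega\,\be_{\xi}) \;=\; p(\bpartial)(\omega\,\be_{\xi}) \;=\; \bigl(p(\bpartial+\xi)\,\omega\bigr)\,\be_{\xi},
$$
so $\ker H_{\sigma} = \{\,p\in \CC[\bx] : p(\bpartial+\xi)\,\omega = 0\,\}$. Setting $d = \deg\omega$, the vanishing $\bpartial^{\alpha}\omega = 0$ for $|\alpha| > d$ shows $(x_i - \xi_i)^{d+1} \in \ker H_{\sigma}$ for every $i$, so the maximal ideal $\mm_{\xi} = (x_1-\xi_1,\ldots,x_n-\xi_n)$ becomes nilpotent in $\cA_{\sigma}$. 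Thus $\cA_{\sigma}$ is Artinian with unique maximal ideal (the image of $\mm_{\xi}$), i.e. local.

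The conclusion then follows from the standard fact that a local ring admits no idempotents other than $0$ and $1$. Given $\cA_{\sigma} = I_1 \oplus I_2$, writing $1 = e_1 + e_2$ with $e_i \in I_i$ yields $e_1 e_2 \in I_1 \cap I_2 = 0$ and $e_i = e_i \cdot 1 = e_i^2$, so each $e_i$ is idempotent. Locality forces one of them, say $e_2$, to equal $0$, whence $I_2 = \cA_{\sigma}\cdot e_2 = 0$. Transporting back through the isomorphism gives $\img H_{\sigma_2} = 0$ and hence $\sigma_2 = 1 \star \sigma_2 = 0$, which contradicts nontriviality of the decomposition.

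The only real technical point is the transport from an image-side direct sum to an algebra-ideal direct sum; this is bookkeeping once one observes that each $\img H_{\sigma_i}$ is a submodule. The structural content lies in identifying $\cA_{\omega\,\be_{\xi}}$ as local, which is precisely where the assumption of a single frequency $\xi$ in the symbol is used.
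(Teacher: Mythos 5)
Your proof is correct and takes a genuinely different route from the paper's. The paper argues on the series side: it applies the Generalized Kronecker Theorem to $\sigma_1, \sigma_2$, invokes the independence of polynomial-exponential series (Lemma 2.7 of \cite{mourrain_polynomial-exponential_2016}) together with a minimality-of-rank reduction to force both $\sigma_i$ to be supported on the single frequency $\xi$, and then observes that $\be_{\xi}$ lies in both $\img H_{\sigma_1}$ and $\img H_{\sigma_2}$, contradicting directness. You instead work on the algebra side: you use the Leibniz identity $p\star(\omega\,\be_{\xi}) = \bigl(p(\bpartial+\xi)\omega\bigr)\be_{\xi}$ to show $(x_i-\xi_i)^{d+1}\in I_\sigma$ for all $i$, hence $\cA_\sigma$ is a local Artinian algebra, and then transport the direct sum $\img H_\sigma = \img H_{\sigma_1}\oplus\img H_{\sigma_2}$ through the $\CC[\bx]$-module isomorphism $\cA_\sigma\cong\img H_\sigma$ into a decomposition $\cA_\sigma = I_1\oplus I_2$ by ideals, which locality (no nontrivial idempotents) forbids. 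Your argument is more structural: it identifies locality of $\cA_\sigma$ as \emph{the} reason a single-frequency symbol is indecomposable, fits the primary-decomposition picture of Artinian Gorenstein algebras already in the paper, and avoids both the minimality reduction and the independence lemma. The paper's argument is more self-contained relative to its own toolkit and makes the role of the Generalized Kronecker Theorem explicit. One small point worth spelling out if you polish this: the step from each $(x_i-\xi_i)$ being nilpotent to $\mm_\xi$ itself being nilpotent in $\cA_\sigma$ uses a pigeonhole count (e.g.\ $\mm_\xi^{n(d+1)}\subseteq I_\sigma$), and you should note that $\cA_\sigma\neq 0$ because $1\star\sigma=\sigma\neq 0$, so that the image of $\mm_\xi$ really is the unique maximal ideal.
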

\begin{proof}
Let $\sigma=\omega\, \be_{\xi}$ and $r=\mu(\omega)$ the rank of $H_{\sigma}$.
Suppose that $\sigma = \sigma_{1} + \sigma_{2}$ with $\img H_{\sigma} = \img H_{\sigma_{1}} \oplus  \img H_{\sigma_{2}}$. We assume that the rank of $H_{\sigma_{1}}$ is minimal.
By the Generalized Kronecker Theorem, $\sigma_{1}=\sum_{i=1}^{r_1} \omega_{1,i}\, \be_{\xi_{1,i}}$, $\sigma_{2}=\sum_{i=1}^{r_2} \omega_{2,i}\, \be_{\xi_{2,i}}$ with 
$\omega_{l,i} \in \Cx$, $\xi_{l,i}\in \CC^{n}$ and 
$$
\omega\, \be_{\xi} =  \sum_{i=1}^{r_1} \omega_{1,i}\, \be_{\xi_{1,i}} +
\sum_{i=1}^{r_2} \omega_{2,i}\, \be_{\xi_{2,i}}.
$$
By the independence of the polynomial-exponential series \cite{mourrain_polynomial-exponential_2016}[Lem. 2.7], we can assume that 
$\xi_{1,1}=\xi_{2,1}=\xi$ and that $\omega=\omega_{1,1}+\omega_{2,1}$ (possibly with $\omega_{2,1}=0$) and that  
$\omega_{1,i}=-\omega_{2,i}$ for $i=2,\ldots, r_{1}=r_{2}$ .
As $\rank H_{\sigma_{1}}= \sum_{i=1}^{r_{1}}\mu(\omega_{1,i})$ is minimal, we can assume moreover that $\omega_{1,i}=0$ for $i=2,\ldots, r_{1}$, that is, $r_{1}=r_{2}=1$.
Then, we have $\sigma=\omega\, \be_{\xi}$, $\sigma_{1}= \omega_{1} \,\be_{\xi}$
$\sigma_{2}= \omega_{2}\, \be_{\xi}$ with $\omega = \omega_{1} + \omega_{2}$. 
As $\img H_{\sigma_{i}} = \< \partial^{\alpha}(\omega_i)\, \be_{\xi} \>$, $i=1,2$, we have $\img H_{\sigma_{1}} \cap  \img H_{\sigma_{2}} \ni \be_{\xi}$ and $\img H_{\sigma}$ is not the direct sum of $\img H_{\sigma_{1}}$ and $\img H_{\sigma_{2}}$. This shows that $\sigma$ is indecomposable.
\end{proof}
The goal of this paper is to present a method to decompose the symbol of a Hankel operator as a sum of indecomposable polynomial-exponential series from truncated Hankel matrices.

\section{Structured decomposition of Hankel matrices}\label{sec:3}
In this section, we show how the decomposition of the symbol $\sigma$ of a Hankel operator $H_{\sigma}$ as a sum of polynomial-exponential series
reduces to the solution of polynomial equations. 
This corresponds to the decomposition of $H_{\sigma}$ as a sum of Hankel matrices of low rank.
We first recall classical techniques for solving polynomial systems and show how these 
methods can be applied on the Hankel matrix $H_{\sigma}$, to compute the decomposition.

\subsection{Solving polynomial equations by eigenvector computation}

A quotient algebra $\cA=\Cx/I$ is {\em Artinian} if it is of finite dimension over $\CC$. In this case, the ideal $I$ defines a finite number of roots $\cV(I)=\{\xi_{1},\ldots, \xi_{r'}\}=\{\xi \in \CC^{n}\mid \forall p \in I, p(\xi)=0 \}$ and we have a decomposition of $\cA$ as a sum of sub-algebras:
$$ 
\cA =\Cx/I = \cA_{1} \oplus \cdots \oplus \cA_{r'}
$$
where $\cA_{i}= \bu_{\xi_{i}} \cA \sim \Cx/Q_{i}$ and $Q_{i}$ is the primary component of $I$ associated to the root $\xi_{i}\in \CC^{n}$. The elements $\bu_{1},\ldots,\bu_{r'}$ satisfy the relations
$$ 
  {{\bu}}_{{\xi}_i}^{2} ({\bx})
  \equiv {\bu}_{{\xi}_i} ({\bx}), \ 
\sum_{i = 1}^r {\bu}_{{\xi}_i} (\bx)
  \equiv 1.
$$
The polynomials $\bu_{\xi_1}, \ldots, \bu_{\xi_{r'}}$ are called {{\em  idempotents } of $\cA$. The dimension of $\cA_{i}$ is the {\em multiplicity} of the point $\xi_{i}$. For more details, see {\cite{elkadi_introduction_2007}[Chap. 4]}.

For $g\in \Cx$, the multiplication operator
  $\cM_g$ is defined by
  \[ \begin{array}{cccl}
       \mathcal{M}_g : & \mathcal{A} & \to & \mathcal{A}\\
       & h & \mapsto & \mathcal{M}_g (h) = g\,h.
     \end{array} \]
The transpose $\cM_g^\T$ of the multiplication operator $\cM_g$ is
  \[ \begin{array}{cccl}
       \cM_g^\T : & \mathcal{A}^{\ast} & \to & \mathcal{A}^{\ast}\\
       & \Lambda & \mapsto & \cM_g^\T (\Lambda) = \Lambda \circ
       \cM_g = g \star \Lambda .
     \end{array} \]
The main property that we will use to recover the roots is the following \cite{elkadi_introduction_2007}[Thm. 4.23]:
\begin{proposition}\label{prop:eigen} Let $I$ be an ideal of $\Cx$ and
  suppose that $\cV(I) = \{ {\xi}_1, {\xi}_2, \ldots, {\xi}_{r'} \}$. Then
  \begin{itemize}
    \item for all $g \in \cA$, the eigenvalues of $\cM_g$ and
    $\cM_g^{\T}$ are the values $g (\mathbf{\xi}_1), \ldots, g
    ({\xi}_{r'})$ of the polynomial $g$ at the roots with multiplicities
    ${\mu}_i = \dim \cA_{i}$.
    
    \item The eigenvectors common to all $\cM_g^{\T}$ with $g \in
    \cA$ are - up to a scalar - the evaluations
    ${\be}_{{\xi}_1}, \ldots, {\be}_{{\xi}_{r'}}$.
  \end{itemize}
\end{proposition}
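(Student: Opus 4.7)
The plan is to exploit the primary decomposition $\cA=\cA_1\oplus\cdots\oplus\cA_{r'}$ already introduced in the text, where $\cA_i=\bu_{\xi_i}\cA\cong\Cx/Q_i$ is a local Artinian algebra with unique maximal ideal $\mm_i$ corresponding to the root $\xi_i$. The idempotent relations $\bu_{\xi_i}^2=\bu_{\xi_i}$ and $\bu_{\xi_i}\bu_{\xi_j}=0$ for $i\neq j$ imply that each summand $\cA_i$ is stable under $\cM_g$, so $\cM_g$ is block-diagonal, and its characteristic polynomial factors as the product of those of the blocks $\cM_g|_{\cA_i}$.

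For statement (i), I would observe that on $\cA_i$ the element $g-g(\xi_i)$ vanishes at $\xi_i$, hence lies in $\mm_i$, and therefore is nilpotent in the local Artinian algebra $\cA_i$. Consequently $\cM_{g-g(\xi_i)}|_{\cA_i}$ is nilpotent, and $\cM_g|_{\cA_i}$ has the single eigenvalue $g(\xi_i)$ with algebraic multiplicity $\dim\cA_i=\mu_i$. Multiplying over $i$, the characteristic polynomial of $\cM_g$ is $\prod_{i=1}^{r'}(t-g(\xi_i))^{\mu_i}$. The transpose $\cM_g^\T$ has the same characteristic polynomial, which finishes (i).

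For statement (ii), I would first verify that $\be_{\xi_i}$ is indeed a common eigenvector: for every $h\in\cA$, $\langle\cM_g^\T(\be_{\xi_i})\mid h\rangle=\langle\be_{\xi_i}\mid gh\rangle=(gh)(\xi_i)=g(\xi_i)\,h(\xi_i)$, so $\cM_g^\T(\be_{\xi_i})=g(\xi_i)\,\be_{\xi_i}$. Conversely, suppose $\Lambda\in\cA^*\setminus\{0\}$ satisfies $\cM_g^\T(\Lambda)=\lambda(g)\Lambda$ for every $g\in\cA$ and suitable $\lambda(g)\in\CC$. The defining identity $\langle g\star\Lambda\mid h\rangle=\langle\Lambda\mid gh\rangle$ gives $\Lambda(gh)=\lambda(g)\Lambda(h)$; specialising to $h=1$ yields $\Lambda(g)=\lambda(g)\Lambda(1)$. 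Since $\Lambda\neq 0$ we must have $\Lambda(1)\neq 0$, and after rescaling so that $\Lambda(1)=1$ we obtain $\lambda(g)=\Lambda(g)$ and $\Lambda(gh)=\Lambda(g)\Lambda(h)$, i.e.\ $\Lambda\colon\cA\to\CC$ is an algebra homomorphism.

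The main obstacle is then to identify $\Lambda$ with some $\be_{\xi_i}$. Its kernel is a maximal ideal of $\cA$, and the maximal ideals of $\cA=\Cx/I$ are precisely $\mm_1,\ldots,\mm_{r'}$ by the Nullstellensatz combined with the primary decomposition. Hence $\ker\Lambda=\mm_i$ for some $i$, which forces $\Lambda(p)=p(\xi_i)=\langle\be_{\xi_i}\mid p\rangle$ for all $p\in\cA$, so $\Lambda=\be_{\xi_i}$ up to the chosen scalar. This completes (ii). The only subtlety worth extra care is the step identifying algebra homomorphisms $\cA\to\CC$ with evaluations at roots; I would invoke the bijection between $\cV(I)$ and $\mathrm{Maxspec}(\cA)$ explicitly, referring to the primary decomposition already used above.
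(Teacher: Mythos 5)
The paper does not prove this proposition; it is imported verbatim as Theorem~4.23 of Elkadi--Mourrain \cite{elkadi_introduction_2007}, so there is no internal proof to compare against. Your argument is correct and is the standard textbook route: block-diagonalise $\cM_g$ via the idempotent decomposition $\cA=\bigoplus_i\cA_i$, use that $g-g(\xi_i)\in\mm_i$ is nilpotent in the local Artinian factor $\cA_i$ to pin down the characteristic polynomial $\prod_i(t-g(\xi_i))^{\mu_i}$, and for part (ii) observe that a common eigenvector $\Lambda$ of all $\cM_g^{\T}$ (normalised so $\Lambda(1)=1$) is an algebra homomorphism $\cA\to\CC$, hence an evaluation at one of the points of $\cV(I)$ by the Nullstellensatz correspondence with maximal ideals. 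All the steps check out, including the small but necessary observation that $\Lambda(1)\neq 0$; this is essentially the proof one would find in the cited reference.
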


  If $B = \{ b_1, \ldots, b_r \}$ is a basis of $\cA$, then the coefficient vector of the evaluation $\be_{{\xi}_i}$ in the dual basis of $B$ is $\left[ \left\langle \be_{{\xi_i}}  |   b_j \right\rangle
  \right]_{\beta \in B} = [b_j ({\xi}_i)]_{i = 1 \ldots r} = B
  (\xi_i)$. The previous proposition says that if $M_g$ is the matrix of
  $\mathcal{M}_g$ in the basis $B$ of $\cA$, then
  \[ M_g^{\T} \,B ({\xi}_i) = g (\xi_i) \,B ({\xi}_i) . \]
  If moreover the basis $B$ contains the monomials $1, x_1, x_2, \ldots, x_n$,
  then the common eigenvectors of $M_g^{\T}$ are of the form
  ${\bv}_i = c\,  [1, \xi_{i, 1}, \ldots, \xi_{i, n}, \ldots]$ and the root $\xi_i$ can be
  computed from the coefficients of ${\bv}_i$ by taking the ratio of
  the coefficients of the monomials $x_1, \ldots, x_n$ by the
  coefficient of $1$: $\xi_{i, k} = \frac{{\bv}_{i, k + 1}}{{\bv}_{i, 1}}$.
  Thus computing the common eigenvectors of all the
  matrices $M_g^{\T}$ for $g \in \cA$ yield the roots ${\xi}_i$  ($i = 1, \ldots, r$).

  In practice, it is enough to compute the common eigenvectors of $M_{x_1}^{\T}, \ldots, M_{x_n}^{\T}$, since $\forall g \in \CC[\bx], M_{g}^{\T} = g (M_{x_1}^{\T},  \ldots, M_{x_n}^{\T})$. Therefore, the common eigenvectors $M_{x_1}^{\T}, \ldots, M_{x_n}^{\T}$ are also eigenvectors of any $M_{g}^{\T}$.
  
  The multiplicity structure, that is the dual $Q_i^{\perp}$ of each primary component $Q_{i}$ of $I$, also called the {\em inverse system} of the point $\xi_i$ can be deduced by linear algebra tools (see e.g. \cite{mourrain_isolated_1996}). 

In the case of simple roots, we have the following property \cite{elkadi_introduction_2007}[Chap. 4]:
\begin{proposition} \label{prop:simpleroot} If the roots $\{ {\xi}_1, {\xi}_2, \ldots, {\xi}_r \}$ of $I$ are simple (i.e. $\mu_i=\dim \cA_{i}=1$) then we have the following:
  \begin{itemize}
  \item $\mathbbm{u}=\{\bu_{\xi_1},\ldots, \bu_{\xi_r}\}$ is a basis of $\cA$.
  \item The polynomials ${\bu}_{\xi_1}, \ldots, \bu_{\xi_r}$ are interpolation polynomials at the roots $\xi_{i}$: $\bu_{\xi_i}(\xi_{j})=1$ if $i=j$ and $0$ otherwise. 
  \item The matrix of $\cM_{g}$ in the basis $\mathbbm{u}$ is the diagonal matrix $\diag(g(\xi_{1}),\ldots, g(\xi_{r}))$.
\end{itemize}
\end{proposition}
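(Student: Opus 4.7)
The plan is to leverage the decomposition $\cA = \cA_1 \oplus \cdots \oplus \cA_{r'}$ already stated in the excerpt, where $\cA_i = \bu_{\xi_i}\cA \cong \Cx/Q_i$, together with the defining relations $\bu_{\xi_i}^2 \equiv \bu_{\xi_i}$ and $\sum_i \bu_{\xi_i} \equiv 1$ in $\cA$. Under the simplicity hypothesis $\mu_i = \dim \cA_i = 1$, each sub-algebra $\cA_i$ is the one-dimensional line $\CC\, \bu_{\xi_i}$, so $r'=r$ and every element of $\cA$ decomposes uniquely as $\sum_{i=1}^r \lambda_i \bu_{\xi_i}$, which immediately gives the first bullet.

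For the interpolation property, I would exploit the orthogonality of the idempotents: multiplying $\sum_j \bu_{\xi_j} \equiv 1$ by $\bu_{\xi_i}$ and using $\bu_{\xi_i}^2 \equiv \bu_{\xi_i}$ forces $\bu_{\xi_i} \bu_{\xi_j} \equiv 0$ in $\cA$ for $i \neq j$. Since the roots are simple, $Q_j = \mm_{\xi_j}$ is the maximal ideal at $\xi_j$, so reducing modulo $Q_j$ is exactly evaluation at $\xi_j$. Evaluating the relation $\bu_{\xi_i} \bu_{\xi_j} \equiv 0 \pmod{Q_j}$ for $i\neq j$, together with $\bu_{\xi_j}(\xi_j)^2 = \bu_{\xi_j}(\xi_j)$ and $\sum_i \bu_{\xi_i}(\xi_j) = 1$, yields $\bu_{\xi_i}(\xi_j) = \delta_{ij}$ after ruling out the degenerate case $\bu_{\xi_j}(\xi_j)=0$ (which would contradict $\sum_i \bu_{\xi_i}(\xi_j)=1$ combined with $\bu_{\xi_i}(\xi_j)\bu_{\xi_j}(\xi_j)=0$ for $i\ne j$).

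For the third bullet, I would observe that $\cM_g$ preserves each $\cA_i$ because $\cA_i$ is an ideal of $\cA$ (it is the image of the idempotent $\bu_{\xi_i}$). Since $\dim \cA_i = 1$, the restriction $\cM_g|_{\cA_i}$ is scalar multiplication by some $\lambda_i \in \CC$, i.e.\ $g\, \bu_{\xi_i} \equiv \lambda_i\, \bu_{\xi_i} \pmod{I}$. Evaluating this identity at $\xi_i$ and using $\bu_{\xi_i}(\xi_i)=1$ from the previous step gives $\lambda_i = g(\xi_i)$, hence the matrix of $\cM_g$ in the basis $\mathbbm{u}$ is $\diag(g(\xi_1),\ldots,g(\xi_r))$.

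The only delicate point is the interpolation identity; everything else follows formally from the algebra decomposition. The argument must carefully distinguish the abstract idempotent relations (which hold in $\cA$, i.e.\ modulo $I$) from their evaluations at the roots, and this is exactly where the simplicity hypothesis $Q_i = \mm_{\xi_i}$ is indispensable, since in the non-simple case $\bu_{\xi_i}(\xi_j) = \delta_{ij}$ would still be true but would no longer characterize the idempotents.
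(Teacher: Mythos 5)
The paper does not actually prove this proposition; it cites \cite{elkadi_introduction_2007}[Chap.\ 4], so there is no in-text argument to compare against. Your approach through the idempotent decomposition is the standard one, and the first and third bullets are handled correctly. However, the argument for the interpolation identity has two genuine gaps.

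First, orthogonality of idempotents. Multiplying $\sum_j \bu_{\xi_j}\equiv 1$ by $\bu_{\xi_i}$ and using $\bu_{\xi_i}^2\equiv\bu_{\xi_i}$ yields only $\sum_{j\neq i}\bu_{\xi_i}\bu_{\xi_j}\equiv 0$, which does not imply each product vanishes (we are over $\CC$, not over a field where a sum of squares vanishing gives termwise vanishing). The correct reason is that $\bu_{\xi_i}\bu_{\xi_j}\in\cA_i\cap\cA_j=\{0\}$ because the sum $\cA=\bigoplus_k\cA_k$ is direct.

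Second, and more seriously, the ``ruling out the degenerate case'' step is circular. Your three relations (idempotency, orthogonality, sum to one, all evaluated at $\xi_j$) do establish that exactly one index $k$ satisfies $\bu_{\xi_k}(\xi_j)=1$ and the rest vanish, but they do not by themselves pin down $k=j$. If $\bu_{\xi_j}(\xi_j)=0$, then $\bu_{\xi_i}(\xi_j)\bu_{\xi_j}(\xi_j)=0$ holds trivially for all $i$ and contradicts nothing, so your claimed contradiction does not materialize. What is needed is the extra structural input that under the CRT isomorphism $\Cx/I\cong\prod_k\Cx/Q_k$, the idempotent $\bu_{\xi_i}$ corresponds to $(0,\dots,1,\dots,0)$ with the $1$ in position $i$; equivalently $\bu_{\xi_i}\equiv 1\pmod{Q_i}$ and $\bu_{\xi_i}\in Q_j$ for $j\neq i$. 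Once you note this, simplicity gives $Q_j=\mm_{\xi_j}$ and hence $\bu_{\xi_i}(\xi_j)=0$ for $i\neq j$ directly, and $\bu_{\xi_j}(\xi_j)=1$ follows from $\sum_i\bu_{\xi_i}(\xi_j)=1$. With this correction the rest of your proof (including the third bullet) goes through.
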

This proposition tells us that if $g$ is separating the roots, i.e. $g(\xi_{i})\neq g(\xi_{j})$ for $i\neq j$, then the eigenvectors of $\cM_{g}$ are, up to a scalar, interpolation polynomials at the roots.

\subsection{Artinian Gorenstein algebra of a multivariate Hankel operator}
We associate to a Hankel operator $H_{\sigma}$, the quotient $\cA_{\sigma}=\Cx/I_{\sigma}$ of the polynomial ring $\Cx$ modulo the kernel $I_{\sigma}=\{ p\in \Cx \mid \forall q \in R, \< \sigma\mid p q\>=0 \}$ of $H_{\sigma}$. We check that $I_{\sigma}$ is an ideal of $\Cx$, so that $\cA_{\sigma}$ is an algebra.

As $\cA_{\sigma}= \Cx/I_{\sigma}\sim \img H_{\sigma}$, the operator $H_{\sigma}$ is of finite rank $r$, if and only if, $\cA_{\sigma}$ is Artinian of dimension $\dim_{\CC} \cA_{\sigma}=r$ .

A quotient algebra $\cA$ is called {\em Gorenstein} if its dual $\cA^{*}=\Hom_{\CC}(\cA,\CC)$ is a free $\cA$-module generated by one element. 

In our context, we have the following equivalent properties \cite{mourrain_polynomial-exponential_2016}:
\begin{itemize}
 \item $\sigma=\sum_{i=1}^{r'} \omega_{i}(\by) \be_{\xi_{i}}(\by)$ with $\omega_{i}\in \CC[\by]$, $\xi_{i}\in \CC^{n}$ and $\sum_{i=1}^{r'}\mu(\omega_{i})=r$,
 \item $H_{\sigma}$ is of rank $r$,
 \item $\cA_{\sigma}$ is an Artinian Gorenstein algebra of dimension $r$.
\end{itemize}
 
The following proposition shows that the frequencies $\xi_{i}$ and the weights $\omega_{i}$ can be recovered from the ideal $I_{\sigma}$ (see \cite{mourrain_polynomial-exponential_2016} for more details):
\begin{proposition}\label{prop:iso}
  If $\sigma (\by) = \sum_{i = 1}^{r'} \omega_i (\by) \be_{{\xi_i}}(\by)$ with 
$\omega_i (\by) \in \CC[\by] \setminus \{0\}$ and $\xi_i \in \CC^n$ pairwise
  distinct, then we have the following properties:
  \begin{itemize}
    \item The points ${\xi}_1, {\xi}_2, \ldots, {\xi}_{r'} \in \CC^n$ are the common roots of the polynomials in $I_{\sigma} = \ker H_{\sigma} = \{ p \in \Cx \mid \forall q \in \Cx, \langle \sigma |   p q \rangle = 0 \}$.
    
    \item The series $\omega_i (\by) \be_{\xi_i}$ is a
    generator of the inverse system of $Q_i^{\bot}$, where $Q_i$ is the
    primary component of $I_{\sigma}$ associated to $\xi_i$ such that $\dim \Cx/Q_i=\mu(\omega_i)$.
 \end{itemize}
\end{proposition}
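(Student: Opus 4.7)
The approach is to reduce to the case of a single polynomial-exponential summand $\omega_i\be_{\xi_i}$, identify its annihilator with a primary ideal, and assemble the primary decomposition of $I_\sigma$.

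First, I would use the independence of polynomial-exponential series with pairwise distinct frequencies (Lem.~2.7 of the companion paper cited in the excerpt) to split the kernel. Writing $p\star\sigma=\sum_{i=1}^{r'}p\star(\omega_i\be_{\xi_i})$, each summand $p\star(\omega_i\be_{\xi_i})$ is a polynomial multiple of $\be_{\xi_i}$, and since the frequencies $\xi_i$ are distinct, such multiples are linearly independent across $i$. Hence $p\in I_\sigma$ iff $p$ annihilates every $\omega_i\be_{\xi_i}$ individually. Setting $Q_i:=\{q\in\Cx\mid q\star(\omega_i\be_{\xi_i})=0\}$, this yields $I_\sigma=\bigcap_{i=1}^{r'} Q_i$.

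Second, I would show that each $Q_i$ is $\mm_{\xi_i}$-primary of codimension $\mu(\omega_i)$. Using $p\star=p(\bpartial)$, the identity $(\partial_{y_j}-\xi_{i,j})(\omega_i\be_{\xi_i})=(\partial_{y_j}\omega_i)\be_{\xi_i}$ iterates to $(x_j-\xi_{i,j})^{N}\star(\omega_i\be_{\xi_i})=(\partial_{y_j}^{N}\omega_i)\be_{\xi_i}$, which vanishes as soon as $N>\deg_{y_j}\omega_i$. This places $\mm_{\xi_i}^N\subseteq Q_i$ for $N$ large, hence $\cV(Q_i)\subseteq\{\xi_i\}$; conversely $\omega_i\be_{\xi_i}\neq0$ forces $1\notin Q_i$, so $\cV(Q_i)=\{\xi_i\}$ and $Q_i$ is $\mm_{\xi_i}$-primary. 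Applying the Generalized Kronecker Theorem (Thm.~\ref{thm:gorenstein}) to the single summand gives $\dim_\CC\Cx/Q_i=\rank H_{\omega_i\be_{\xi_i}}=\mu(\omega_i)$. Since the $\mm_{\xi_i}$ are distinct maximal ideals, $\bigcap_i Q_i$ is the primary decomposition of $I_\sigma$, from which part~(1) follows at once.

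Finally, for part~(2), I would observe that by duality $Q_i^\perp\subseteq\Cx^*$ is the space of linear forms annihilated by $Q_i$, of dimension $\dim_\CC\Cx/Q_i=\mu(\omega_i)$. Since $Q_i$ annihilates $\omega_i\be_{\xi_i}$, this element lies in $Q_i^\perp$, and the cyclic $\Cx$-submodule it generates, namely $\Cx\star(\omega_i\be_{\xi_i})=\img H_{\omega_i\be_{\xi_i}}$, also has dimension $\mu(\omega_i)$ by the Generalized Kronecker Theorem. Equality of dimensions forces $\Cx\star(\omega_i\be_{\xi_i})=Q_i^\perp$, so $\omega_i\be_{\xi_i}$ generates the inverse system of $Q_i$. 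The main obstacle is the Leibniz-rule computation underlying $\mm_{\xi_i}^N\subseteq Q_i$ (i.e.\ verifying that $Q_i$ is truly $\mm_{\xi_i}$-primary and not merely supported there); once this is settled, the remainder is a clean application of Macaulay duality between Artinian Gorenstein quotients of $\Cx$ and cyclic $\Cx$-submodules of $\CC[[\by]]$.
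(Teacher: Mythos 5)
The paper does not actually prove this proposition in-text; it is stated and referred to the companion paper \cite{mourrain_polynomial-exponential_2016} ``for more details.'' So there is no in-paper proof to compare against, and your argument has to be judged on its own terms.

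Your argument is correct and follows the natural route. The splitting $I_\sigma = \bigcap_i Q_i$ with $Q_i = \ker H_{\omega_i\be_{\xi_i}}$ via the independence lemma, the computation $(x_j-\xi_{i,j})\star(\omega\,\be_{\xi_i}) = (\partial_{y_j}\omega)\,\be_{\xi_i}$ giving $\mm_{\xi_i}^N\subseteq Q_i$, the dimension count $\dim \Cx/Q_i = \rank H_{\omega_i\be_{\xi_i}} = \mu(\omega_i)$ from the Generalized Kronecker theorem, and the two-sided dimension argument $\Cx\star(\omega_i\be_{\xi_i}) \subseteq Q_i^\perp$ with both spaces of dimension $\mu(\omega_i)$ are all sound.

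Two small points worth tightening. First, you identify the ``main obstacle'' as the Leibniz-rule computation, but that computation is routine; the genuinely load-bearing step is passing from $\mm_{\xi_i}^N\subseteq Q_i \subsetneq \Cx$ to ``$Q_i$ is $\mm_{\xi_i}$-primary.'' That implication holds because $\sqrt{Q_i}=\mm_{\xi_i}$ is maximal and in a Noetherian ring any ideal whose radical is maximal is automatically primary; you should say this explicitly rather than treat it as part of the Leibniz step. Second, when you invoke independence to conclude that $p\star\sigma=0$ forces each $p\star(\omega_i\be_{\xi_i})=0$, note that the relevant form of the cited Lemma~2.7 is that the map $(q_1,\ldots,q_{r'})\mapsto \sum_i q_i\,\be_{\xi_i}$ from $\bigoplus_i\CC[\by]$ into $\CC[[\by]]$ is injective when the $\xi_i$ are pairwise distinct, not merely that the $\be_{\xi_i}$ themselves are linearly independent over $\CC$; phrasing it as the former makes the splitting step airtight.
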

This result tells us that the problem of decomposing $\sigma$ as a sum of polynomial-exponential series reduces to  the solution of the polynomial equations $p=0$ for $p$ in the kernel $I_{\sigma}$ of $H_{\sigma}$.

Another property that will be helpful to determine a basis of $\cA_{\sigma}$ is the following:
\begin{lemma}\label{lem:basis} Let $B = \{ b_1, \ldots, b_r \}$, $B' = \{ b_1', \ldots,
  b_r' \} \subset \Cx$. If the matrix $H_{\sigma}^{B,B'} = (\langle \sigma |   b_i b_j' \rangle)_{1 \leq i,j\leq r}$ is invertible, then B and $B'$ are linearly independent in  $\cA_{\sigma}$.
\end{lemma}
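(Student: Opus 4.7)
The plan is to proceed by contradiction, exploiting the fact that linear dependence in $\cA_{\sigma} = \Cx/I_{\sigma}$ means that a nontrivial linear combination of the elements lies in the kernel $I_{\sigma}$ of $H_{\sigma}$, and then to translate this into a vanishing condition on the rows or columns of the matrix $H_{\sigma}^{B,B'}$.

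Concretely, suppose first that $B = \{b_1, \ldots, b_r\}$ is linearly dependent in $\cA_{\sigma}$. Then there exist scalars $c_1, \ldots, c_r \in \CC$, not all zero, with $p := \sum_{i=1}^{r} c_i b_i \in I_{\sigma}$. By definition of $I_{\sigma} = \ker H_{\sigma}$, we have $\langle \sigma \mid p\, q \rangle = 0$ for every $q \in \Cx$. Specializing $q = b_j'$ for $j = 1, \ldots, r$ yields
\[
\sum_{i=1}^{r} c_i \, \langle \sigma \mid b_i b_j' \rangle = 0 \quad \text{for all } j,
\]
which is precisely the statement that the nonzero row vector $(c_1, \ldots, c_r)$ lies in the left kernel of $H_{\sigma}^{B,B'}$, contradicting invertibility. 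The argument for $B'$ is symmetric: a dependence $\sum_j c_j' b_j' \in I_{\sigma}$ pairs against each $b_i$ under $\sigma$ to produce a nonzero vector in the right kernel of $H_{\sigma}^{B,B'}$, again contradicting invertibility.

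There is no real obstacle here; the only point to be careful about is the order of the indices in the definition of $H_{\sigma}^{B,B'}$ (rows indexed by $B$, columns by $B'$) and the fact that $I_{\sigma}$ is characterized by vanishing of $\langle \sigma \mid p\, q\rangle$ against all $q$, so that we are free to choose $q = b_j'$ (or $q = b_i$). Once these identifications are made, the proof is an immediate application of the nondegeneracy of the bilinear pairing $(p,q) \mapsto \langle \sigma \mid pq\rangle$ restricted to $\langle B\rangle \times \langle B'\rangle$, whose matrix in the given bases is $H_{\sigma}^{B,B'}$.
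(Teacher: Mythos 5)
Your proof is correct and follows essentially the same route as the paper's: a linear dependence in $B$ (or $B'$) modulo $I_{\sigma}$ gives a nonzero vector in the left (or right) kernel of $H_{\sigma}^{B,B'}$, contradicting invertibility; the only cosmetic difference is that you phrase it as a contradiction while the paper argues directly that the coefficients must vanish.
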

\begin{proof}
  Suppose that $H_{\sigma}^{B, B'}$ is invertible. If there exists $p = \sum_i
  \lambda_i b_i$ ($\lambda_i \in \CC$) such that $p \equiv 0$ in
  $\cA_{\sigma}$. Then $p \star \sigma = 0$ and $\forall q \in R$,
  $\langle \sigma |   p q \rangle = 0$. In particular, for $j = 1, \ldots, r$ we have
  \[ \sum_{i = 1}^r \langle \sigma |   b_i b_j' \rangle \lambda_i = 0. \]
  As $H_{\sigma}^{B, B'}$ is invertible, $\lambda_i = 0$ for $i = 1, \ldots, r$ and
  $B$ is a family of linearly independent elements in $\cA_{\sigma}$.
  Since we have $(H_{\sigma}^{B, B'})^{\T} = H_{\sigma}^{B', B}$, \ we prove by a
  similar argument that $H_{\sigma}^{B, B'}$ invertible also implies that
  $B'$ is linearly independent in $\cA_{\sigma}$.
\end{proof}

By this Lemma, bases of $\cA_{\sigma}$ can be computed by identifying non-zero minors of maximal size of the matrix of $H_{\sigma}$.

\begin{proposition}\label{eq:HankelMult}Let $B,B'$ be basis of $\cA_{\sigma}$ and $g \in \Cx$. We have
  \begin{equation}
    {H}_{g \star \sigma}^{B,B'} = (M_{g}^{B})^{\T} H_{\sigma}^{B,B'}
    = H_{\sigma}^{B,B'} M_{g}^{B'}. \label{eq:Hankel}
  \end{equation}
where $M_g^B$ (resp. $M_g^{B'}$) is the matrix of the multiplication by $g$ in the basis $B$ (resp. $B'$) of $\cA
_{\sigma}$.
\end{proposition}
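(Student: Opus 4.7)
The plan is to unpack the definition of the truncated Hankel matrix and use two ingredients: the adjunction $\langle g\star\sigma \mid p\rangle = \langle \sigma \mid g\,p\rangle$, and the fact that the pairing $(p,q)\mapsto \langle\sigma\mid p\,q\rangle$ factors through $\cA_\sigma\times\cA_\sigma$, i.e.\ it vanishes as soon as $p$ or $q$ lies in $I_\sigma$.

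First I would write, for all $1\le i,j\le r$,
\[
(H_{g\star\sigma}^{B,B'})_{ij} = \langle g\star\sigma \mid b_i\, b_j'\rangle = \langle \sigma \mid g\, b_i\, b_j'\rangle,
\]
using the definition of the $\star$ action. Next, since $B'$ is a basis of $\cA_\sigma$, the class of $g\,b_j'$ modulo $I_\sigma$ decomposes as $g\,b_j' \equiv \sum_{k=1}^{r} (M_g^{B'})_{kj}\, b_k' \pmod{I_\sigma}$. Thus $g\,b_j' = \sum_k (M_g^{B'})_{kj}\, b_k' + \rho$ with $\rho\in I_\sigma$, and because $\rho\in I_\sigma$ one has $\langle \sigma\mid b_i\,\rho\rangle=0$ by the very definition of $I_\sigma$. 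Substituting gives
\[
(H_{g\star\sigma}^{B,B'})_{ij} = \sum_{k=1}^r \langle \sigma\mid b_i\,b_k'\rangle\,(M_g^{B'})_{kj} = (H_\sigma^{B,B'}\, M_g^{B'})_{ij},
\]
which is the right-hand equality.

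Symmetrically, expanding the class of $g\,b_i$ in the basis $B$ yields $g\,b_i \equiv \sum_{k=1}^r (M_g^B)_{ki}\, b_k \pmod{I_\sigma}$, and the same argument (using that $\sigma$ kills $I_\sigma\cdot \Cx$ on the other factor) produces
\[
(H_{g\star\sigma}^{B,B'})_{ij} = \sum_{k=1}^r (M_g^B)_{ki}\,\langle \sigma\mid b_k\,b_j'\rangle = \bigl((M_g^B)^{\T}\, H_\sigma^{B,B'}\bigr)_{ij},
\]
giving the left-hand equality and completing the proof.

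The only subtlety worth flagging is the bookkeeping of transposes: one has to keep straight that $(M_g^B)_{ki}$ is the coefficient of $b_k$ in the expansion of $g\,b_i$, so that summing over the first index of $M_g^B$ produces the transpose on the left. Apart from this convention check, the argument is a direct computation relying on Lemma~\ref{lem:basis} (to guarantee $B,B'$ are genuine bases so that the matrices $M_g^B,M_g^{B'}$ are well defined) and on the defining property of $I_\sigma=\ker H_\sigma$.
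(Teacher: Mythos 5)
Your proof is correct and follows essentially the same route as the paper: expand $g\,b_i$ (resp.\ $g\,b_j'$) modulo $I_\sigma$ in the basis $B$ (resp.\ $B'$), use that $\langle\sigma\mid\cdot\,\rangle$ annihilates any product having a factor in $I_\sigma$, and read off the two matrix identities entrywise. If anything, your index and transpose bookkeeping is cleaner than the paper's own write-up, which contains a couple of notational slips ($b'_i$ where $b_i$ is meant, and $m'_{l,i}$ versus $m_{l,i}$) in the same computation.
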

\begin{proof}
Let $B=\{b_1,\ldots,b_r\}, B'=\{b'_1,\ldots,b'_r\}$ be two bases of $\cA_{\sigma}$. 
We have $g\, {b}_j=\sum_{i=1}^{r} m_{i,j} {b}'_i + \kappa$ where $m_{i,j}$ is the $(i,j)$ entry of the matrix $M_g^{B}$ of multiplication by $g$ in the basis $B$ and $\kappa \in I_{\sigma}$.
Then,
\begin{align*}
({H}_{g \star \sigma}^{B,B'})_{[i,j]}
= \< \sigma \mid  g\, b_i\, b'_j \>
&
=  \< \sigma \mid  \sum_{l=1}^r m_{l,i}  b_l b'_j \> 
+  \< \sigma \mid  \kappa\, b_j\>  
=  \sum_{l=1}^r m'_{l,i} \< \sigma \mid  b_l b'_j \> 
= ((M_{g}^{B})^{t} {H}_{\sigma}^{B,B'})_{[i,j]}.
\end{align*}
Similarly, we have $g\, b'_j=\sum_{i=1}^{r} m'_{i,j} b'_i+ \kappa'$ where $m'_{i,j}$ is the $(i,j)$ entry of the matrix $M_g^{B'}$ of multiplication by $g$ in the basis $B'$ and $\kappa' \in I_{\sigma}$.
For ${1\leq i,j\leq r}$, the entry $(i,j)$ of ${H}_{g \star \sigma}^{B,B'}$ is 
\begin{align*}
({H}_{g \star \sigma}^{B,B'})_{[i,j]}
= \< \sigma \mid b_i \, g\, b'_j \>
&
=  \< \sigma \mid  \sum_{l=1}^r m_{l,j} b_i\, b'_l  \> +  \< \sigma \mid  b_i \kappa' \>  
=  \sum_{l=1}^r  \< \sigma \mid  b_i\, b'_l \>\,  m_{l,j}
= ({H}_{\sigma}^{B,B'} M_{g}^{B})_{[i,j]}.
\end{align*}
This concludes the proof of the relations (\ref{eq:Hankel}).
\end{proof}

We deduce the following property:
\begin{proposition}\label{decomposviamulti}
  \label{prop:geneigen} Let $\sigma (\by) = \sum_{i = 1}^r \omega_i (\by) \be_{\xi_i}(\by)$ with $\omega_i \in \CC[\by] \setminus \{0\}$ and $\xi_i \in \CC^n$ distinct and let $B,B'$ be bases of $\cA_{\sigma}$. We have the following properties:
  \begin{itemize}
    \item For $g\in \Cx$, $M_{g}^{B'}= (H_{\sigma}^{B,B'})^{-1} H_{g\star \sigma}^{B,B'}$, $(M_{g}^{B})^{\T}= H_{g\star \sigma}^{B,B'} (H_{\sigma}^{B,B'})^{-1}$.
    
    \item For $g \in \Cx$, the generalized eigenvalues of
    $(H_{g\star \sigma}^{B,B'}, H_{\sigma}^{B,B'})$ are $g({\xi}_i)$ with multiplicity ${\mu}_i ={\mu} (\omega_i)$,
    $i = 1, \ldots, r$.
    
    \item The generalized eigenvectors common to all $(H_{g\star\sigma}^{B,B'}, H_{\sigma}^{B,B'})$ for $g \in \Cx$ are - up to a
    scalar - $(H_{\sigma}^{B,B'})^{-1} \, B(\xi_i)$, $i=1,\ldots,r$.
  \end{itemize}
\end{proposition}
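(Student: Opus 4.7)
The plan is to derive the three assertions as direct consequences of Proposition \ref{eq:HankelMult}, Lemma \ref{lem:basis}, and the eigenvalue/eigenvector characterization in Proposition \ref{prop:eigen} and \ref{prop:iso}. The first step is to note that since $B$ and $B'$ are bases of $\cA_{\sigma}$, the matrix $H_{\sigma}^{B,B'}$ is invertible: indeed by Lemma \ref{lem:basis}, if it had a nontrivial kernel, the corresponding linear combination of elements of $B'$ would be zero in $\cA_{\sigma}$, contradicting that $B'$ is a basis. Equivalently, this follows from the fact that $H_{\sigma}^{B,B'}$ represents the nondegenerate bilinear form $(p,q)\mapsto \langle\sigma\mid pq\rangle$ on the Artinian Gorenstein algebra $\cA_{\sigma}$ in the bases $B, B'$.

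For the first bullet, I would simply take Equation \eqref{eq:Hankel}, namely $H_{g\star\sigma}^{B,B'} = (M_g^B)^{\T}H_{\sigma}^{B,B'} = H_{\sigma}^{B,B'} M_g^{B'}$, and multiply each identity by $(H_{\sigma}^{B,B'})^{-1}$ on the appropriate side to isolate $(M_g^B)^{\T}$ and $M_g^{B'}$.

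For the second bullet, I would rewrite the generalized eigenvalue problem $H_{g\star\sigma}^{B,B'}\,v=\lambda H_{\sigma}^{B,B'}\,v$ using the right factorization as $H_{\sigma}^{B,B'}(M_g^{B'}v-\lambda v)=0$. Invertibility of $H_{\sigma}^{B,B'}$ reduces this to the ordinary eigenvalue problem $M_g^{B'}v=\lambda v$. Proposition \ref{prop:eigen} then identifies the eigenvalues as $g(\xi_i)$ with multiplicities $\dim\cA_{\sigma,i}$, and Proposition \ref{prop:iso} gives $\dim\cA_{\sigma,i}=\dim\CC[\bx]/Q_i=\mu(\omega_i)$.

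For the third bullet, I would use the left factorization instead: the generalized eigenvector equation becomes $(M_g^B)^{\T}w=\lambda w$ after the substitution $w=H_{\sigma}^{B,B'}v$. By Proposition \ref{prop:eigen}, the common eigenvectors of the family $\{(M_g^B)^{\T}\}_{g\in \Cx}$ are, up to scalar, the evaluations $\be_{\xi_i}$ expressed in the dual basis of $B$, whose coordinate vectors are $B(\xi_i)=[b_j(\xi_i)]_{j=1,\ldots,r}$. Inverting the substitution gives $v=(H_{\sigma}^{B,B'})^{-1}B(\xi_i)$ up to scalar, as claimed. I do not anticipate any real obstacle here: the proof is essentially a bookkeeping exercise converting generalized eigenproblems into ordinary ones via the invertible matrix $H_{\sigma}^{B,B'}$, with the only subtle point being the matching of multiplicities through Proposition \ref{prop:iso}.
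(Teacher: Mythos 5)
Your proof is correct and follows essentially the same route as the paper, which simply cites Proposition~\ref{eq:HankelMult} and Proposition~\ref{prop:eigen} and notes that the coordinate vector of $\be_{\xi_i}$ in the dual basis of $B$ is $B(\xi_i)$; you spell out the same chain of deductions (invertibility of $H_{\sigma}^{B,B'}$, multiplying the factorization identity, reducing the generalized eigenproblem to an ordinary one, and matching multiplicities via Proposition~\ref{prop:iso}) in fuller detail.
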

\begin{proof}
The two first points are direct consequences of  Propositions \ref{eq:HankelMult} and \ref{prop:eigen}. The third point is also a consequence of Proposition \ref{prop:eigen}, since the coordinate vector of the evaluation $\be_{\xi_i}$ in the dual basis of $B$ is $B(\xi_i)$ for $i=1,\ldots,r$.
\end{proof}

This proposition shows that the matrices of multiplication by an element $g$ in $\cA$, and thus the roots $\{\xi_1, \ldots, \xi_r\}$ and their multiplicity structure, can be computed from truncated Hankel matrices, provided we can determine bases $B$, $B'$ of $\cA_{\sigma}$. In practice, it is enough to compute the generalized eigenvectors common to $(H_{x_i\star\sigma}^{B,B'}, H_{\sigma}^{B,B'})$ for $i=1,\ldots, n$ to recover the roots.
As $H_{x_i\star\sigma}^{B,B'}= H_{\sigma}^{x_{i} B,B'}= H_{\sigma}^{B, x_{i}\, B'}$, the decomposition can be computed from sub-matrices of $H_{\sigma}^{B,B'^{+}}$  
or $H_{\sigma}^{B^{+},B'}$ where $B^{+}= B \cup x_1 B \cup \cdots \cup x_n B$, $B'^{+}= B' \cup x_1 B' \cup \cdots \cup x_n B'$. 
\section{Decomposition algorithm}\label{sec:4}
We are given the first moments $\sigma_{\alpha}, |\alpha| \leq d$ of the series $\sigma(\by)=\sum_{i=1}^{r}\omega_{i}\be_{\xi_{i}}(\by)$ with $\omega_{i} \in \CC\backslash{(0)}$ and $\xi_{i} \in \CC^{n}.$ The goal is to recover the number of terms r, the constant weights $\omega_{i}$ and the frequencies $\xi_{i}$ of the series $\sigma(\by)$.

\subsection{Computation of a basis}
The first problem is to find automatically bases $B_{1}$ and $B_{2}$ of the quotient algebra $\cA_{\sigma}$, of maximal sizes such that $H_{\sigma}^{B_{1},B_{2}}$ is invertible. Using Proposition \ref{decomposviamulti}, we will compute  the multiplication matrices $M_{g}^{B_{2}}$ for $g=x_{i},i=1,\ldots,n$. The frequencies $\xi_{j}$ and the weights $\omega_{j}$, $j=1,\ldots,r$ will be deduced from their eigenvectors, as described in section \ref{weights}.

Given the set of moments $(\sigma_{\alpha})_{|\alpha|\le d}$, we create two sets $A_{1}=(\bx^{\alpha})_{|\alpha| \leq d_1}$ and $A_{2}=(\bx^{\beta})_{|\beta| \leq d_2}$ of monomials such that $\alpha$ and $\beta$ are multi-indices in $\NN^{n}$
with $|\alpha|\le d_1$ and $|\beta|\le d_2$.
The degrees $d_{1}$ and $d_{2}$ are chosen such that $d_1+d_2< d$. Let $N_{1}=|A_{1}|$ and $N_{2}=|A_{2}|$. The truncated Hankel operator associated to $\sigma$ is:
$$\begin{array}{ccccc}
H_{\sigma}^{d_{1},d_{2}} & : & {\Cx}_{d_2} & \to & ({\Cx}_{d_1})^{*} \\
& & p & \mapsto & p\star\sigma
\end{array}$$
The Hankel matrix in these two monomial sets $A_{1}$ and $A_{2}$ is defined by $H_\sigma^{d_1,d_2}=[\sigma_{(\alpha+\beta)}]_{\substack{\mid \alpha \mid \leq d_1 \\ \mid \beta \mid \leq d_2}}$.

Computing the singular value decomposition of $H_{\sigma}^{d_1,d_2}$, we obtain 
$$
 H_{\sigma}^{d_1,d_2}=USV^{\T}
$$ 
 where $S$ is the diagonal matrix of all singular values of $H_{\sigma}^{d_1,d_2}$ arranged in a decreasing order, $U$ is an unitary matrix whose columns are the left singular vectors of $H_{\sigma}^{d_1,d_2}$, $V$ is an unitary matrix whose columns are the right singular vectors of $H_{\sigma}^{d_1,d_2}$. We denote by  $U^{\H}$ the hermitian transpose of $U$ and $\overline{V}$ the conjugate 
 of $V$.

Let $u_{i}=[u_{\alpha,i}]_{\alpha \in A_{1}}$ and $v_{j}=[v_{\beta,j}]_{\beta \in A_{2}}$  be  respectively the $i^{\textrm{th}}$ and $j^{\textrm{th}}$ columns of $U^{\H}$ and $\overline{V}$. They are vectors respectively in $\mathbb{C}^{N_{1}}$ and $\CC^{N_{2}}$. We denote by $u_{i}(\bx)= u_{i}^{\T}\, A_{1} =\sum_{|\alpha|\leq d_{1}} u_{\alpha,i} \bx^{\alpha}$ and $v_{j}(\bx)=v_{j}^{\T} \, A_{2}=\sum_{|\beta|\leq d_{2}} v_{\beta,j} \bx^{\beta}$ the corresponding polynomials. 
The bases formed by these first $r$ polynomials are denoted 
$U_{r}^{\H}:=(u_{i}(\bx))_{i=1,\ldots,r}$ and $\overline{V}_{r}:=(v_{j}(\bx))_{j=1,\ldots,r}$. We will also denote by $U_{r}^{\H}$ (resp. $\overline{V}_{r}$) the corresponding coefficient matrix, formed by the first rows (resp. columns) of $U^\H$ (resp. $\overline{V}$). We denote by $S_r$ the diagonal matrix of the first $r$ rows and columns of $S$, formed by the first $r$ singular values.
\begin{proposition} Let $\sigma=\sum_{i=1}^{r'}\omega_i(\by) \be_{\xi_{i}}$ with $\omega_{i}\in \CC[\by]$, $\xi_{i}\in \CC^n$ and $\sum_{i=1}^{r'}\mu(\omega_i)=r$. If $\rank H_{\sigma}^{d_1,d_2}=r$, then the sets of polynomials $U_{r}^{\H}$ and $\overline{V}_{r}$ are bases of $\cA_{\sigma}$. 
The matrix $M_{x_{i}}^{\overline{V}_{r}}$ associated to the multiplication operator by $x_{i}$ in the basis $\overline{V}_{r}$ of $\cA_{\sigma}$ is $M_{x_{i}}^{\overline{V}_{r}}= S_{r}^{-1}\,U_{r}^{\H}\,H_{x_{i} \star \sigma}^{d_1,d_2}\,\overline{V}_{r}$ $i=1,\ldots,n$.
\end{proposition}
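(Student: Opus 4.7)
The plan is to reduce both claims to a single key identity: $H_\sigma^{U_r^\H,\,\overline V_r} = S_r$. Once this is established, the basis property follows from Lemma~\ref{lem:basis} combined with a dimension count, and the multiplication formula is an immediate application of Proposition~\ref{decomposviamulti}.

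First I would exploit the SVD $H_\sigma^{d_1,d_2} = U S V^\T$ together with the unitarity of $U$ and $V$ to derive the global identity $U^\H H_\sigma^{d_1,d_2} \overline V = S$: the left factor $U^\H U$ collapses to the identity, while the right factor $V^\T \overline V$ also reduces to the identity, since it is the complex conjugate of $V^\H V$. Restricting to the first $r$ rows of $U^\H$ and the first $r$ columns of $\overline V$ gives $U_r^\H\, H_\sigma^{d_1,d_2} \,\overline V_r = S_r$. To recognize this product as a Hankel matrix in the new bases, I would use the fact that for column vectors $u \in \CC^{N_1}$, $v \in \CC^{N_2}$ regarded as coefficient vectors of polynomials $u(\bx), v(\bx)$, the scalar $\langle \sigma \mid u(\bx)\,v(\bx)\rangle$ equals $u^\T H_\sigma^{d_1,d_2} v$; applied column by column this yields $H_\sigma^{U_r^\H,\overline V_r} = U_r^\H\, H_\sigma^{d_1,d_2}\, \overline V_r = S_r$.

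With $S_r$ invertible (since $\rank H_\sigma^{d_1,d_2} = r$ forces its first $r$ singular values to be nonzero), Lemma~\ref{lem:basis} implies that $U_r^\H$ and $\overline V_r$ are linearly independent families in $\cA_\sigma$. By Theorem~\ref{thm:gorenstein} applied to the hypothesis on $\sigma$, one has $\dim_{\CC}\cA_\sigma = \rank H_\sigma = \sum_{i=1}^{r'} \mu(\omega_i) = r$, so $r$ linearly independent elements in an $r$-dimensional space are automatically bases. Finally, applying Proposition~\ref{decomposviamulti} with $B = U_r^\H$ and $B' = \overline V_r$ gives $M_{x_i}^{\overline V_r} = (H_\sigma^{U_r^\H,\overline V_r})^{-1} H_{x_i \star \sigma}^{U_r^\H,\overline V_r}$. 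The same coefficient-pairing identification applied to the shifted symbol $x_i \star \sigma$ yields $H_{x_i \star \sigma}^{U_r^\H,\overline V_r} = U_r^\H\, H_{x_i \star \sigma}^{d_1,d_2}\, \overline V_r$, and substituting $(H_\sigma^{U_r^\H,\overline V_r})^{-1} = S_r^{-1}$ produces the announced expression.

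The delicate point is the SVD reduction in the first step: because the paper writes the factorization with $V^\T$ (rather than $V^\H$) and then pairs it with $\overline V$, one has to verify explicitly that the product $V^\T \overline V$ collapses to the identity for a complex unitary $V$. Once this bookkeeping is handled, the rest is essentially automatic from the tools developed in Section~\ref{sec:3}.
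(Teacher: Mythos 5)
Your proposal is correct and follows essentially the same route as the paper's own proof: establish $H_\sigma^{U_r^\H,\overline V_r} = U_r^\H H_\sigma^{d_1,d_2}\overline V_r = S_r$ via the SVD and the coefficient-pairing formula, invoke Lemma~\ref{lem:basis} plus the dimension count to get the basis claim, and then plug into Proposition~\ref{decomposviamulti}. The only cosmetic difference is that you make the $\dim\cA_\sigma = r$ step explicit via Theorem~\ref{thm:gorenstein} and flag the $V^\T\overline V = I$ bookkeeping, both of which the paper leaves implicit.
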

\begin{proof}
The $(i,j)$ entry of the matrix  $H_\sigma^{U_{r}^{\H},\overline{V}_{r}}$ of the truncated Hankel operator of $\sigma$ with respect to $U_{r}^{\H}$ and $\overline{V}_{r}$ is equal to: 
\begin{equation} \label{eq1}
\begin{split} 
(H_\sigma^{U_{r}^{\H},\overline{V}_{r}})_{[i,j]} & = \< \sigma | u_{i}(\bx)v_{j}(\bx) \>\\
& =  \< \sigma | (\sum_{|\alpha| \leq d_{1}} u_{\alpha,i}\bx^{\alpha})\,(\sum_{|\beta| \leq d_{2}} v_{\beta,j} \bx^{\beta}) \> = \sum_{|\alpha| \leq d_{1}}u_{\alpha,i}  \sum_{|\beta| \leq d_{2}} \< \sigma | \bx^{\alpha} \bx^{\beta} \> v_{\beta,j}\\ & =[U_{r}^{\H}H_{\sigma}^{d_{1},d_{2}}\overline{V}_{r}]_{[i,j]}. 
\end{split}
\end{equation}
Using the SVD decomposition of $H_{\sigma}^{d_1,d_2}$, we have 
$$
 H_\sigma^{U_{r}^{\H},\overline{V}_{r}}=U_{r}^{\H}H_{\sigma}^{d_{1},d_{2}}\overline{V}_{r}=U_{r}^{\H}USV^{\T}\overline{V}_{r}=S_{r},
$$  
since $U^H\, U=\mathrm{Id}_{N_1}$, $V^T\, \overline{V}=\mathrm{Id}_{N_2}$.
As $r=\rank H_{\sigma}^{d_1,d_2}$, $S_{r}$ is invertible and by  Lemma \ref{lem:basis}, 
$U_{r}^{\H}$ and $\overline{V}_{r}$ are linearly independent in $\cA_{\sigma}$, which is a vector space of dimension $r$. Thus they are bases of $\cA_{\sigma}$. 

Let $H_{{x_{i}}\star\sigma}^{U_{r}^{\H},\overline{V}_{r}}$ be the matrix of the truncated Hankel operator of ${x_{i}}\star\sigma$ on the two bases $U_{r}^{\H}$ and $\overline{V}_{r}$. 
A similar computation yields $H_{{x_{i}}\star\sigma}^{U_{r}^{\H},\overline{V}_{r}}=U_{r}^{\H}H_{x_{i}\star \sigma}^{d_1,d_2}\overline{V}_{r}$, where $H_{x_{i}\star\sigma}^{d_1,d_2}$ is the matrix of the truncated Hankel operator of ${x}_{i}\star\sigma$ in the bases $A_{1}$ and $A_{2}$ for all $i=1,\dots,n$. Since $S_{r}$ is an invertible matrix, by Proposition \ref{decomposviamulti} we obtain $M_{\bx_{i}}^{\overline{V}_{r}}=(H_{\sigma}^{U_{r}^{\H},\overline{V}_{r}})^{-1} H_{x_{i}\star\sigma}^{U_{r}^{\H},\overline{V}_{r}}=S_{r}^{-1}U_{r}^{\H}H_{x_{i}\star\sigma}^{d_{1},d_{2}}\overline{V}_{r}$.	
\end{proof}	
By this proposition $U^{\H}_{r}$ and $\overline{V}_{r}$) are bases of 
$\cA_{\sigma}$.
By Proposition \ref{decomposviamulti}, the eigenvalues of $M_{x_{i}}^{\overline{V}_{r}}$	are  the $i^\text{th}$ coordinates $x_{i}(\xi_{j})=\xi_{j,i}$ of the roots $\xi_{j}$ for $i=1,\ldots, n, j=1,\ldots,r$.

\subsection{Computation of the weights}\label{weights}
The weight $\omega_{i}, i=1,\ldots,r$ of the decomposition of $\sigma$
can be easily computed using the eigenvectors of all $M_{x_{j}}^{\overline{V}_{r}}, j=1,\ldots,n$ as follows.
\begin{proposition} Let $\sigma=\sum_{i=1}^{r}\omega_i\, \be_{\xi_{i}}$ with $\omega_{i}\in \CC\setminus\{0\}$, $\xi_{i}=(\xi_{i,1},\ldots, \xi_{i,n}) \in \CC^n$ distinct and let $M_{x_{j}}^{\overline{V}_{r}}$ be the matrix of multiplication by $x_j$ in the basis $\overline{V}_{r}$.
Let $\bv_{i}$ be a common eigenvector of $M_{x_{j}}^{\overline{V}_{r}}$, $j=1,.., n$ for the eigenvalues $\xi_{i,j}$. 
Then the weight of $\be_{\xi_{i}}$ in the decomposition of $\sigma$ is 
 \begin{equation}\label{eq:weight}
\omega_{i} = \frac{[1]^{\T}\, H_{\sigma}^{d_{1},d_{2}}\, \overline{V}_{r}\, \bv_{i}}{[\xi_{i}^{\alpha}]_{\alpha\in A_{2}}^{\T} \overline{V}_{r}\, \bv_{i}}.
\end{equation}
\end{proposition}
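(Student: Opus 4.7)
The plan is to identify the common eigenvector $\bv_i$ with the coordinate vector of the idempotent $\bu_{\xi_i}\in\cA_\sigma$ expressed in the basis $\overline{V}_r$, and then to interpret the numerator and denominator of (\ref{eq:weight}) as, respectively, $\langle\sigma\mid\bu_{\xi_i}\rangle$ and $\bu_{\xi_i}(\xi_i)$, up to the same scalar. Since the weights are nonzero constants, every $\omega_i$ satisfies $\mu(\omega_i)=1$, so by Theorem \ref{thm:gorenstein} the rank of $H_\sigma$ is $r$ and every root of $I_\sigma$ is simple.

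First I would invoke Proposition \ref{prop:simpleroot}: the idempotents $\{\bu_{\xi_1},\ldots,\bu_{\xi_r}\}$ form a basis of $\cA_\sigma$ in which every multiplication operator $\cM_{x_j}$ is diagonal with eigenvalues $\xi_{1,j},\ldots,\xi_{r,j}$. If $P$ denotes the change-of-basis matrix from $\{\bu_{\xi_i}\}$ to $\overline{V}_r$, then $M_{x_j}^{\overline{V}_r}=P\,\diag(\xi_{1,j},\ldots,\xi_{r,j})\,P^{-1}$ for every $j$. Because the roots $\xi_i$ are pairwise distinct, at least one coordinate function $x_j$ separates any two of them, so the simultaneous eigenspaces of $M_{x_1}^{\overline{V}_r},\ldots,M_{x_n}^{\overline{V}_r}$ are one-dimensional and their common eigenvectors are, up to scalar, the columns of $P$. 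Hence there exists $c_i\in\CC^{*}$ with $\bv_i=c_i\,P_{\cdot,i}$, and therefore the vector $\overline{V}_r\,\bv_i\in\CC^{N_2}$ is exactly $c_i$ times the coefficient vector of $\bu_{\xi_i}(\bx)$ in the monomials $\bx^{\alpha}$, $\alpha\in A_2$.

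Next I would unfold the two factors. Set $P_i(\bx):=\sum_{\alpha\in A_2}(\overline{V}_r\,\bv_i)_\alpha\,\bx^\alpha=c_i\,\bu_{\xi_i}(\bx)$. The row vector $[1]^{\T}$ picks out the row of $H_\sigma^{d_1,d_2}$ indexed by the constant monomial, so expanding the product yields
$$
[1]^{\T}H_\sigma^{d_1,d_2}\overline{V}_r\,\bv_i
= \sum_{\beta\in A_2}\sigma_\beta\,(\overline{V}_r\,\bv_i)_\beta
= \langle\sigma\mid P_i\rangle
= c_i\,\langle\sigma\mid\bu_{\xi_i}\rangle,
$$
while the column vector $[\xi_i^{\alpha}]_{\alpha\in A_2}$ has entries $\xi_i^{\alpha}$, so
$$
[\xi_i^{\alpha}]^{\T}\overline{V}_r\,\bv_i
= P_i(\xi_i)
= c_i\,\bu_{\xi_i}(\xi_i).
$$

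Finally I would use the interpolation property of Proposition \ref{prop:simpleroot}, namely $\bu_{\xi_i}(\xi_j)=\delta_{ij}$, together with $\sigma=\sum_j\omega_j\be_{\xi_j}$ and $\langle\be_{\xi_j}\mid p\rangle=p(\xi_j)$, to obtain $\langle\sigma\mid\bu_{\xi_i}\rangle=\sum_j\omega_j\,\bu_{\xi_i}(\xi_j)=\omega_i$ and $\bu_{\xi_i}(\xi_i)=1$. Taking the ratio of the two displayed expressions, the common scalar $c_i$ cancels and the formula (\ref{eq:weight}) follows. The only step requiring care is the one-dimensionality of the simultaneous eigenspaces, which uses the distinctness of the frequencies; everything else is a direct unfolding of definitions.
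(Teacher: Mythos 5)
Your proof is correct and follows essentially the same route as the paper's: identify the common eigenvector $\bv_i$ with the coordinate vector of the idempotent $\bu_{\xi_i}$ in the basis $\overline{V}_r$ up to a scalar, use $\langle\sigma\mid\bu_{\xi_i}\rangle=\omega_i\bu_{\xi_i}(\xi_i)=\omega_i$, and rewrite numerator and denominator in Hankel-matrix form so that the common scalar cancels. The only cosmetic difference is that you spell out the one-dimensionality of the simultaneous eigenspaces via the separating coordinate argument, which the paper leaves implicit in its appeal to Proposition~\ref{prop:simpleroot}.
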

\begin{proof}
According to Proposition \ref{prop:simpleroot}, the eigenvectors of 
the multiplication operator $\cM_{x_{i}}$ are, up to scalar, the interpolation polynomials $\bu_{i}(\bx)$ at the roots.
Let $\bu_{\xi_{i}}$ be the coefficient vector associated to $\bu_{\xi_{i}}(\bx)$ in the basis $\overline{V}_{r}$ of $\cA_{\sigma}$. 
Let $\bv_{i}=\lambda \bu_{\xi_{i}}$ be the eigenvector of $M_{x_{i}}^{\overline{V}_{r}}$ associated to the eigenvalue $\xi_{j,i}$ for $j=1,\ldots,r, i=1,\ldots,n$ such that $\bv_{i}(\bx)={A_{2}}^{\T}\,\tilde{\bv}_{i}=\sum_{|\beta|\leq d_{2}} \tilde{\bv}_{i\beta} \bx^{\beta}$ where $\tilde{\bv}_{i}=\overline{V}_{r}\,\bv_{i}$. 
Applying the series on all the idempotents, we obtain
$$
\<\sigma\mid \bu_{\xi_{i}}(\bx)\>=\<\sum_{j=1}^{r} \omega_j\be_{\xi_j}(\textbf{y}) \mid \bu_{\xi_{i}}(\bx)\>=\omega_{i}\bu_{\xi_{i}}(\xi_{i})=\omega_{i}.
$$
Therefore, we have $\omega_{i}=\frac{\<\sigma\mid\lambda \bu_{\xi_{i}}(\bx)\>}{\lambda}=\frac{\<\sigma\mid \bv_{i}(\bx)\>}{\lambda}=\frac{\<\sigma\mid \bv_{i}(\bx)\>}{\bv_{i}(\xi_{i})}$
	 because of $\bv_{i}(\xi_{i})=(\lambda \bu_{\xi_{i}})(\xi_{i})=\lambda$.  Then 
$$
<\sigma \mid \bv_{i}(\bx)>=[1]^{\T}\, H_{\sigma}^{d_{1},d_{2}}\tilde{\bv_{i}}=  [1]^{\T}\, H_{\sigma}^{d_{1},d_{2}}\, \overline{V}_{r}\, \bv_{i},
$$ 
where $[1]$ is the vector of coefficients of the polynomial $1$ in the monomial basis $A_{1}=(\bx^{\alpha})_{|\alpha| \leq d_{1}}$ and 
$$
{\bv_{i}(\xi_{i})}=[\xi_{i}^{\alpha}]_{\alpha\in A_{2}}^{\T}\, 
\tilde{\bv}_i = [\xi_{i}^{\alpha}]_{\alpha\in A_{2}}^{\T} \overline{V}_{r}\, \bv_{i}.
$$
We deduce that $\omega_{i} = \frac{[1]^{\T}\, H_{\sigma}^{d_{1},d_{2}}\, \overline{V}_{r}\, \bv_{i}}{[\xi_{i}^{\alpha}]_{\alpha\in A_{2}}^{\T} \overline{V}_{r}\, \bv_{i}}.$
\end{proof}
\subsection{Algorithm}
We describe now the algorithm to recover the sum $\displaystyle\sigma(\by)= \sum_{j=1}^{r} \omega_j\,\be_{\xi_j}(\by)$, $\omega_j \in \CC\setminus \{0\}, \xi_{j} \in \CC^{n}$, from the first coefficients $(\sigma_{\alpha})_{|\alpha| \leq d}$ of the formal power series $\displaystyle\sigma=\sum_{\alpha}\sigma_{\alpha}\frac{\by^{\alpha}}{\alpha!}$.

\begin{algorithm}[H]\caption{Decomposition of polynomial-exponential series with constant weights}\label{algo:genprony1}
\textbf{Input:} the moments $\sigma_{\alpha}$ of $\sigma$ for $|\alpha|\le d$.\\
Let $d_{1}$ and $d_{2}$ be positive integers such that $d_{1}+d_{2}+1 = d$, for example $d_{1}:=\lceil\frac{d-1}{2}\rceil$ and $d_{2}:=\lfloor\frac{d-1}{2}\rfloor$.
	\begin{enumerate}
		\item Compute the Hankel matrix $H_\sigma^{d_1,d_2}=[\sigma_{(\alpha+\beta)}]_{\substack{\mid \alpha \mid \leq d_1 \\ \mid \beta \mid \leq d_2}}$ of $\sigma$ in for the monomial sets $A_{1}=(\bx^{\alpha})_{|\alpha| \leq d_{1}}$ and $A_{2}=(\bx^{\beta})_{|\beta| \leq d_{2}}$.
		\item Compute the singular value decomposition of $H_{\sigma}^{d_{1},d_{2}}=USV^{\T}$ with singular values $s_1\ge s_2\ge \cdots\ge s_m\ge 0$.
		\item Determine its numerical rank, that is, the largest integer $r$ such that $\frac{s_{r}}{s_1} \geq \epsilon$.
		\item Form the matrices  $M_{x_{i}}^{\overline{V}_{r}}=S_{r}^{-1}U_{r}^{\H}H_{x_{i}\star\sigma}^{d_{1},d_{2}}\overline{V}_{r}, i=1,\ldots,n$, where $H_{{x}_{i}\star\sigma}^{d_1,d_2}$ is the Hankel matrix associated to ${x}_{i}\star\sigma$.
    	\item Compute the eigenvectors $\bv_j$ of $\sum_{i}^{n}l_{i}M_{x_{i}}$ for a random choice of $l_i$ in $[-1,1]$, $i=1,\ldots,n$ and for each $j=1,\ldots,r$ do the following:
	    \begin{itemize}
		\item Compute $\xi_{j,i}$ such that $M_i \bv_j = \xi_{j,i} \bv_j$ for $i=1,\ldots, n$ and deduce the point $\xi_j:=(\xi_{j,1},\ldots,\xi_{j,n})$.
		\item Compute $\omega_j = \frac{\< \sigma | \bv_j(\bx)\>}{\bv_j(\xi_j)} = \frac{[1]^{\T}\, H_{\sigma}^{d_{1},d_{2}}\, \overline{V}_{r}\,\bv_j}{[\xi_{i}^{\alpha}]_{\alpha\in A_{2}}^{\T} \overline{V}_{r}\, \bv_{j}}$ where $[1]$ is the coefficient vector of $1$ in the basis $A_{1}$.
	    \end{itemize}
        \end{enumerate}
\textbf{Output:} {$r\in \NN$, $\omega_j\in \CC\backslash{(0)}$, $\xi_{j} \in \mathbb{C}^{n}$, j=$1,\ldots,r$ such that $\sigma(\by)= \sum_{j=1}^{r} \omega_j\,\be_{\xi_j}(\by)$ up to degree $d$.}
\end{algorithm}
\section{Experimentation}\label{sec:5}
In this section, we present numerical experimentations for the decomposition of $\sigma=\sum_{\alpha \in \NN} \sigma_{\alpha}\frac{\by^{\alpha}}{\alpha!}$ from its moments $\sigma_{\alpha}$. For a given number of variables $n$ and a fixed degree $d$, we compute the coefficients   $\sigma_{\alpha}=\sigma(\bx^{\alpha})=\sum_{i=1}^{r} \omega_{i}\xi_{i}^{\alpha}$ such that  $|\sigma_{\alpha}| \leq d$ where $\omega_{j} \in \CC\backslash{(0)}$ and $\xi_{i}=(\xi_{i,1},\ldots,\xi_{i,n}), i=1, \ldots, r$ have random uniform distributions such that $0.5\,M \leq |\xi_{i,j}| \leq 1.5\,M$, $-\pi \leq arg({\xi_{i,j}}) \leq \pi$, $0.5 \leq|\omega_{i}| \leq 1$ and $-\pi \leq arg({\omega_{i}}) \leq \pi$, for $M \geq 1$. To analyse the numerical behaviour of our method, we compare the results with the known frequencies and weights used to compute the moments of $\sigma$.  
 
We use Maple 16 to implement the algorithms. The arithmetic operations are   performed on complex numbers, with a numerical precision fixed to $Digits=15$.
 
\subsection{Numerical behavior against perturbation}
We apply random perturbations on the moments of the form $\sigma_{\alpha}+\epsilon(p_{\alpha}+\boldsymbol{i}\,q_{\alpha})$ where $p_{\alpha}$ and $q_{\alpha}$ are two random numbers in $[-1,1]$ with a uniform distribution, and $\varepsilon=10^{-e}$ where $e$ is a fixed positive integer.
 
To measure the consistency of our algorithm, we compute the maximum error between the input frequencies $\xi_{i}$ and the output frequencies  $\xi_{i}^{'}$, and between the input weights $\omega_{i}$ and the output weights $\omega_{i}^{'}$:
 \begin{equation}\label{eqn:maximumerreur}
\mathrm{err} = \max(err(\xi_{i},\xi_{i}^{'}),\mathrm{err}(\omega_{i},\omega_{i}^{'})) \hspace{1mm} \mathrm{where} \hspace{1mm}     \mathrm{err}(\omega_{i},\omega_{i}^{'})=\max_{1 \leq i \leq r}|\omega_{i}-\omega_{i}^{'}|\hspace{1mm}\mathrm{and}\hspace{1mm} \mathrm{err}(\xi_{i},\xi_{i}^{'})=\max_{1 \leq i \leq r}{\|\xi_{i}-\xi_{i}^{'}\|}_{2}.
 \end{equation}
 In each computation, we compute the average of the maximum errors resulting from $10$ tests. 
 \begin{figure}[ht!]
 	\begin{center}
 		\subfloat[]{%
 			\label{fig:first}
 			\includegraphics[height=0.32\textwidth]{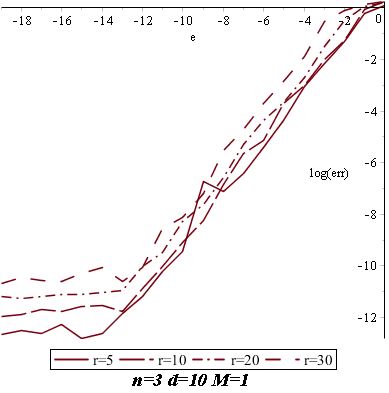}
 		}%
 		\subfloat[]{%
 			\label{fig:second}
 			\includegraphics[height=0.32\textwidth]{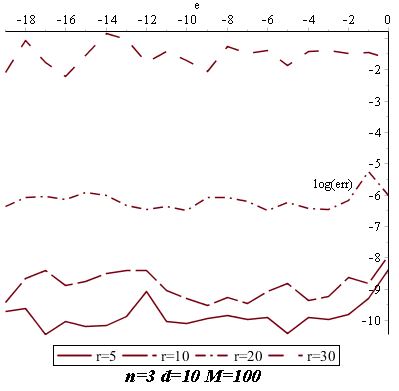}
 		}
 	\end{center}
 	\caption{%
 		The influence of the amplitude of the frequencies on the maximum error.
 	}%
 \end{figure}
 \begin{figure}[ht!]
 	\begin{center}
 		\subfloat[]{%
 			\label{fig:fourth}
 			\includegraphics[height=0.32\textwidth]{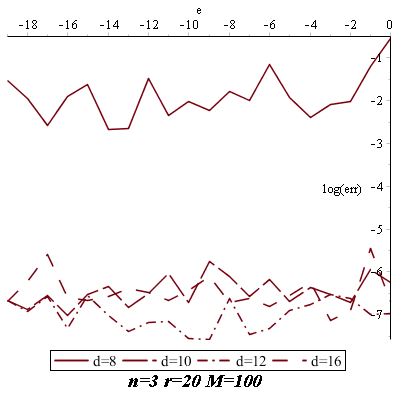}
 		}%
 		\subfloat[]{%
 			\label{fig:fifth}
 			\includegraphics[height=0.32\textwidth]{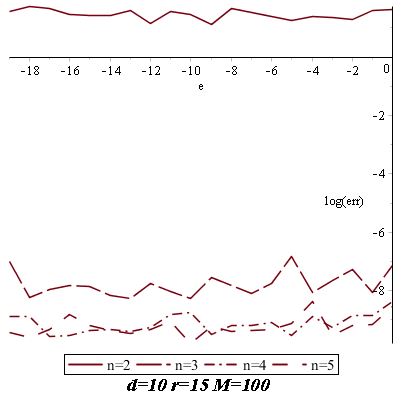}
 		}%
 	\end{center}
 	\caption{The influence of the degree and dimension on the maximum error.}%
 \end{figure}
 
 In Figures \ref{fig:first} and \ref{fig:second}, we study the evolution of the error in terms of the perturbation $\varepsilon=10^{(-e)}$, for a fixed degree $d=10$, a number of variables $n=3$, different ranks $r=5, 10, 20, 30$ and for two different amplitudes of the frequencies $M=1$ and $M=100$.
 
In Figure \ref{fig:first} for $M=1$, the lower error is for the lower rank $r=5$. Between $\varepsilon \approx 10^{-12}$ and $\varepsilon=1$, the error err increases in terms of the perturbation
as  err$=exp(t\, e)$ for some slope $t\approx 1$. 
The slope $t$ remains approximately constant but the error increases slightly with the rank $r$.
Before $\varepsilon = 10^{-13}$, it is approximately constant (approximately $10^{-12}$ for $r=5$) 
This is due to the fact, in this range, the perturbation is lower than the numerical precision.  
 
 In Figure \ref{fig:second} for $M=100$, the lower error is also for the lower rank. The error has almost a constant value when $e$ varies. 
It is bigger than for $M=1$ for small perturbations. For $r=5, 10$ the error slightly increases between $e=-2$ and $e=0$, with a similar slope. This figure clearly shows that the error degrades significantly from $M=1$ to $M=100$ and that the degradation increases rapidly with the rank $r$.
 
In Figure \ref{fig:fourth}, we fix the number of variables $n=3$, the rank $r=20$ and we change the degree $d$ which induces a change in the dimensions of the Hankel matrices. For $e \in [-19,0]$, the error decreases when we increase the degree from $d=8$ to $d=10$. It is slightly lower when $d=12$ than when $d=10$, and error is similar for $d=10$ and $d=16$. This increase of the precision with the degree can be related to ratio of number of moments by the number of values to recover in the decomposition. 
 
 In Figure \ref{fig:fifth}, we fix the degree $d=10$ and the rank $r=15$ and we change the number of variables $n=2, 3, 4, 5$. The dimension of the matrices increases polynomially with $n$. We observe that the error decreases quickly with $n$. It shows that the precision improves significantly with the dimension.

\subsection{Numerical Rank}\label{numericalrank}
 To compute the number $r$ of terms in the decomposition of $\sigma$, we arrange the diagonal entries in the decreasing order 
 $s_{1} \geq s_{2} \geq \cdots \geq s_{r} > s_{r+1} \geq \cdots \geq {s_{N_2}}$ and we determine the numerical rank of $H_\sigma^{d_1,d_2}$ by fixing the largest integer $r$ such that $s_{r}/{s_{1}} \geq \epsilon$. 
 
  It is known that the ill-conditioning of the Hankel matrix associated to Prony's method is in the origin of a numerical instability with respect to perturbed measurements
$$\tilde{\sigma}_{\alpha+\beta}=\sigma_{\alpha+\beta}+\varepsilon_{\alpha+\beta}\qquad |\alpha+\beta|\leq d.$$
In our algorithm the computation of the numerical rank can be affected by this instability. We can explain this instability, using a reasoning close to \cite{sauer_pronys_2016-1}, as follows.

We denote by $s_j$ (resp. $\tilde{s}_j$) the $j^{\mathrm{th}}$ largest singular value of $H:=H_\sigma^{d_1,d_2}$ (resp. $\tilde{H}:=H_{\tilde{\sigma}}^{d_1,d_2}$). The perturbation result for singular values satisfies the estimate (see \cite{golub_matrix_1996})
$$|s_j-\tilde{s}_j|\leq s_1(\varepsilon)=\|\varepsilon\|_2.$$
Then, as long as the perturbation is small relative to the conditioning of the problem, that is
$$\|\epsilon\|_2\leq \tfrac{1}{2}s_r\quad\text{ provided that } r=\text{rank}(H),$$
then $\|s_j-\tilde{s}_j\|\leq\frac{1}{2}s_r\;\forall j$ and therefore $\tilde{s}_r\geq\frac{1}{2}s_r$ and $\tilde{s}_{r+1}\leq\frac{1}{2}s_r$. Hence by taking $\varepsilon\leq\frac{1}{2}s_r$ as a threshold level we will be sure that the rank is calculated correctly.

But the problem may be badly ill-conditioned and then such a level will not be reasonable. In fact
$$H=(\sigma_{\alpha+\beta})_{\substack{|\alpha|\leq d_1\\|\beta|\leq d_2}}=\left(\sum_{i=1}^r\omega_i\xi_i^{\alpha+\beta}\right)_{\substack{|\alpha|\leq d_1\\|\beta|\leq d_2}}=\sum_{i=1}^r\omega_iv_{i,d_1}v_{i,d_2}^T,$$
where $v_{i,d_1}=(\xi_i^{|\alpha|})_{|\alpha|\leq d_1}$ (resp. $v_{i,d_2}=(\xi_i^{|\beta|})_{|\beta|\leq d_2}$) is the $i^{\mathrm{th}}$  column of the Vandermonde matrix $V_{d_1}=(\xi_i^\alpha)_{\substack{1\leq i\leq r\\ |\alpha|\leq d_1}}$ (resp. $V_{d_2}=(\xi_i^\beta)_{\substack{1\leq i\leq r\\ |\beta|\leq d_2}}$). Then 
$$H=\sum_{i=1}^r\omega_i V_{d_1}e_ie_i^TV_{d_2}^T=V_{d_1}\left(\sum_{i=1}^r\omega_ie_ie_i^T\right)V_{d_2}^T=V_{d_1}CV_{d_2}^T$$
where $C=\text{diag}\left((\omega_i)_{1\leq i\leq r}\right)$ is the diagonal matrix with $\omega_i$ on the diagonal.

Now, using the fact that
$$s_r(H)=\min_{\substack{\|x\|=1\\ Hx\neq0}}\|Hx\|_2=\min_{\substack{\|x\|=1\\ Hx\neq0}}\|V_{d_1}CV_{d_2}^Tx\|_2,$$
we remark that if $V_{d_2}$ (resp. $V_{d_1}$) is ill-conditioned then $\|V_{d_2}x\|_2$ (resp. $\|V_{d_1}CV_{d_2}^Tx\|$) may be very small and $s_r(H)$ is small as well. This situation can also be produced if $\max_{1\leq i\leq r}\omega_i$ is very small.
In our numerical experiments, the $\omega_i$ are chosen randomly in $[0.5,1]$ and then they don't cause any numerical instability.

On the other hand, the $\xi_i$ vary in such a way that their amplitude can be large, which can generate very ill-conditioned Vandermonde matrices. In fact, it is known (see \cite{pan_how_2016}), that for a nonsingular univariate Vandermonde matrix $V=(a_i^j)_{0\leq i,j\leq n-1}$, where $(a_i)_{0\leq i\leq n-1}$ denotes a vector of $n$ distinct knots, the condition number of $V$ is exponential in $n$ if $\max_{0\leq i\leq n-1}|a_i|>1$ or in $k$ if $|a_i|<1$ for at least $k$ knots $a_i$. Therefore an $n\times n$ Vandermonde matrix is badly ill-conditioned unless all knots lie in or near the disc $D(0,1)=\{x:\;|x|\leq1\}$ and unless they lie mostly on or near its boundary $C(0,1)$.

In the multivariate case, it appears that the condition number of multivariate Vandermonde matrices has the same behavior as in univariate case. That is, it is exponential in the highest degree of the entries.

According to the foregoing, when the amplitude $M$ of the frequencies increases (even for moderate values of $M$) the numerical rank calculated by truncating the singular values of $H$ will be different from the exact rank of $H$. An idea to remedy this problem is to rescale the frequencies $\xi_i$ in order to obtain points with coordinates close to the unitary circle $C(0,1)$.
 
\subsection{Rescaling}
As we have seen in Figures \ref{fig:first} and \ref{fig:second}, the error increases significantly with the amplitude $M$. To remedy this issue, we present a rescaling technique and its numerical impact. It's done like this:
 \begin{itemize} 
 	\item For a chosen non-zero constant $\lambda$, we transform the input moments of the series as follows:
 	$$\sigma(\by):=\sum_{\alpha \in N^{n}}\sigma_{\alpha}\frac{\textbf{y}^{\alpha}}{\alpha!} \longrightarrow \tilde{\sigma}(\by):=\sigma(\lambda\by)=\sum_{\alpha \in N^{n}}\lambda^{|\alpha|}\sigma_{\alpha}\frac{\textbf{y}^{\alpha}}{\alpha!},$$ 
 	which corresponds to the scaling on the frequencies $e_{\xi}(\lambda\by)=e_{\lambda\xi}(\by)$.
 	\item We compute decomposition of $\tilde{\sigma}(\by)=\sigma(\lambda\by)$ from the moments $\tilde{\sigma}_{\alpha}=\lambda^{|\alpha|}\sigma_{\alpha}$.
 	\item We apply the inverse scaling on the computed frequencies $\tilde{\xi}_{i}$ which gives $\xi_{i}=\frac{\tilde{\xi}_{i}}{\lambda}=\big(\frac{\tilde{\xi}_{i,1}}{\lambda},\ldots , \frac{\tilde{\xi}_{i,n}}{\lambda} \big)$.
 	\end{itemize}
 
 	To determine the scaling factor $\lambda$, we use $\lambda:=\frac{1}{m}$ where $m=\frac{max_{|\alpha|=d}|\sigma_{\alpha}|}{max_{|\alpha|={d-1}} |\sigma_{\alpha}|}$. This is justified as follows: If $|\omega_{j}| \leq 1, j=1,\dots,r$, then $|\sigma_{\alpha}|=|\sum_{j=1}^{r} \omega_{j}{\xi_{j}}^{\alpha}| \simeq M^{d}$ for $|\alpha|=d$ big and for $M$ is the highest modulus of frequencies. Similarly  $|\sigma_{\alpha^{'}}| \simeq M^{d-1}$ for $|\alpha^{'}|=d-1$. Then we have  $m=\frac{max_{|\alpha|=d}|\sigma_{\alpha}|}{max_{|\alpha|={d-1}} |\sigma_{\alpha}|} \approx M$.
 \begin{figure}[ht!]
 	\begin{center}
 			\includegraphics[height=0.32\textwidth]{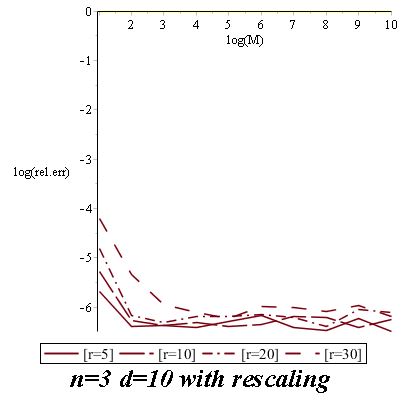}
 			\caption{The rescaling influence}
 			\label{fig:three}	
 	\end{center}
 \end{figure}

To study the numerical influence of the rescaling, we compute the maximum relative error between the input frequencies $\xi_{i}$ and the output frequencies  $\tilde{\xi}_{i}$, and the maximum error between the input weights $\omega_{i}$ and the output weights $\tilde{\omega}_{i}$, and we take their maximum:
\begin{equation}
\mathrm{rel.err} = \mathrm{\max(rel.err}(\xi_{i},\tilde{\xi}_{i}),\mathrm{err}(\omega_{i},\tilde{\omega}_{i}))
\end{equation}
 $\mathrm{where}\hspace{1mm}\mathrm{err}(\omega_{i},\tilde{\omega}_{i})=\max_{1 \leq i \leq r}|\omega_{i}-\tilde{\omega}_{i}| \hspace{1mm}\mathrm{and}\hspace{1mm} \mathrm{rel.err}(\xi_{i},\tilde{\xi}_{i})=\max_{1 \leq i \leq r}\frac{{\|\xi_{i}-\tilde{\xi}_{i}\|}_{2}}{{\|\xi_{i}\|}_{2}}$.
 	
 In Figure \ref{fig:three}, we see the influence of the rescaling on the maximum relative error. The perturbation on the moments is of the order 
$\varepsilon=10^{-6}$. Each curve for $r=5, 10, 20, 30,$ has almost a constant evolution with the increasing values of $M$ between $10^{2}$ and $10^{10}$. The maximum relative error is lower when $M=100$ than when $M=1$ which is confirmed with the results shown in Figures \ref{fig:first} and \ref{fig:second}. When we increase $r$ the maximum relative error decreases slightly. 

In conclusion, the rescaling has an important influence on the computation of the maximum relative error when the modulus $M$ of points is quite big.

The scaling of moments by some computed factor $\lambda$ also enhances the computation of the numerical rank $r$ and leads to a better decomposition as we have seen in \ref{numericalrank}.

\subsection{Newton iteration}
Given a perturbation $\tilde{\sigma}=\sum_{\alpha}\tilde{\sigma}_{\alpha} \frac{\by^{\alpha}}{\alpha!}$ of a polynomial-exponential series $\sigma=\sum_{i=1}^{r}\omega_{i} \be_{\xi_{i}}(\by)$, we want to remove the perturbation on $\tilde{\sigma}$ by computing the polynomial-exponential series of rank $r$, which is the closest to the perturbed series $\tilde{\sigma}$. Starting from an approximate decomposition, using the previous method on the perturbed data, we apply a Newton-type method to minimize the distance between the input series and a weighted sum of $r$ exponential terms.

To evaluate the distance between the series, we use the first moments $\tilde{\sigma}_{\alpha}$ for $\alpha\in A$, where $A$ is a finite subset of $\NN^{n}$.
For $\alpha \in A$, let $F_{\alpha}(\Xi) = \sum_{i=1} \omega_{i} \xi_{i}^{\alpha}- \tilde{\sigma}_{\alpha}$ be the error function for the moment $\tilde{\sigma}_{\alpha}$, where $\omega_{i}, \xi_{i,j}$ are variables.
We denote by $\Xi=(\xi_{i,j})_{1\le i\le r, 0 \le j\le n }$ this set of variables, with the convention that $\xi_{i,0}= \omega_{i}$ for $i= 1, \ldots, r$. Let $I=[1,r]\times [0,n]=\{(i,j)\mid 1\le i\le r, 0 \le j\le n \}$ be the indices of the variables and $N=(n+1)\,r=|I|$.
We denote by $F(\Xi)=(F_{\alpha}(\Xi))_{\alpha\in A}$ the vector of these error functions.

We want to minimize the distance
$$ 
E(\Xi)=\frac{1}{2}\sum_{\alpha\in A} |F_{\alpha}(\Xi)|^{2} =\frac{1}{2} \| F(\Xi) \|^{2}.
$$

Let $M(\Xi_{i})=[w_{i} \xi_{i}^{\alpha}]_{\alpha\in A}$.
We denote by $V(\Xi)=\left( \partial_{(i,j)} M(\Xi_{i})\right)_{(i,j)\in I}$ the $|A|\times N$ Vandermonde-like matrix, which columns are the vectors $\partial_{(i,j)} M(\Xi_{i})$. 
The gradient of $E(\Xi)$ is
$$ 
\nabla E(\Xi) = ( \< \partial_{{(i,j)}} M(\Xi_{i}), F(\Xi)\>)_{(i,j)\in I} = V(\Xi)^{T} F(\Xi)
$$
where $\partial_{(i,j)}$ is the derivation with respect to $\Xi_{i,j}$ for $(i,j)\in I$. 
We denote by $V(\Xi)=\left( \partial_{(i,j)} M(\Xi_{i})\right)_{(i,j)\in I}$ the $|A|\times N$ Vandermonde-like matrix, which columns are the vectors $\partial_{(i,j)} M(\Xi_{i}), (i,j) \in I$. 

To find a local minimizer of $E(\Xi)$, we compute a solution of the system $\nabla E(\Xi)=0$, by Newton method.
The Jacobian of $\nabla E(\Xi)$ with respect to the variables $\Xi$ is 
\begin{align*}
J_{\Xi}(\nabla E) &= 
\< \partial_{{(i,j)}} M(\Xi_{j}) , \partial_{(i',j')} M(\Xi_{j'})\>
+ \left( \< \partial_{{(i,j)}} \partial_{(i',j')} M(\Xi_{i}), F(\Xi)\>\right)_{(i,j)\in I,(i',j')\in I} \\
& =  V(\Xi)^{\T} V(\Xi) + \left( \< \partial_{{(i,j)}} \partial_{(i',j')} M(\Xi_{i}), F(\Xi)\> \right)_{(i,j)\in I,(i',j')\in I}.
\end{align*}
Notice that $\partial_{{(i,j)}} \partial_{(i',j')} M(\Xi_{i})=0$ if $i\neq i'$ so that the first matrix is a block diagonal matrix.
Then, Newton iteration takes the form:
$$ 
\Xi_{n+1} = \Xi_{n} - J_{\Xi}(\nabla E)^{-1} \nabla E(\Xi_{n}).
$$

To study the numerical influence of Newton method, we compute the maximum absolute error between the input frequencies $\tilde{\xi}_{i}$ and the output frequencies $\tilde{\xi}_{i}$, and the maximum error between the input weights $\omega_{i}$ and the output weights $\tilde{\omega}_{i}$ as in \eqref{eqn:maximumerreur}.

\begin{figure}[ht!]
	\begin{center}
		\subfloat[]{%
			\label{fig:six}
			\includegraphics[height=0.32\textwidth]{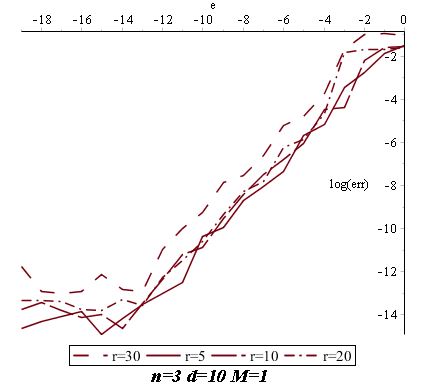}
		}%
		\subfloat[]{%
			\label{fig:seventh}
			\includegraphics[height=0.32\textwidth]{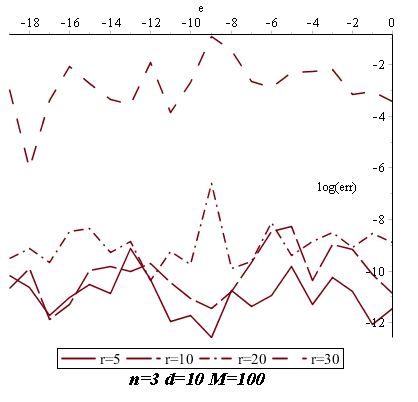}
		}
	\end{center}
	\caption{Newton influence with 5 iterations}
\end{figure}

Figures \ref{fig:six} and \ref{fig:seventh} show that Newton iterations improve the error.  
The error decreases by a factor of $\approx 10^{2}$ compared to the computation without Newton iterations. 
In Figure \ref{fig:seventh} for $M=100$ the error is smaller than without Newton iterations by a similar order of magnitude (see in Figure \ref{fig:second}).

\ \\\noindent{}\textbf{References}

\end{document}